\newcommand{\field}[1]{\mathbb{#1}}
\newcommand{\N}{\field{N}}
\def\O{\mathcal{O}}
\def\la{\lambda}
\def\si{\sigma}
\def\eps{\varepsilon}
\numberwithin{equation}{section}
\newtheorem{theorem}{Theorem}
\newtheorem{lemma}[theorem]{Lemma}
\newtheorem{corollary}[theorem]{Corollary}
\newtheorem{conjecture}[theorem]{Conjecture}
\newtheorem{proposition}[theorem]{Proposition}
\theoremstyle{remark}
\newtheorem{remark}[theorem]{Remark}
\renewenvironment{proof}[1][Proof]{\begin{trivlist}
\item[\hskip \labelsep {\bfseries #1:}]}{\qed\end{trivlist}}
\title[Asymptotics for skew standard Young tableaux]{Asymptotics for skew standard Young tableaux via bounds for characters}
\author{Jehanne Dousse and Valentin F\'eray}
\address{Institut f\"ur Mathematik, Universit\"at Z\"urich, Winterthurerstr. 190, CH-8032 Z\"urich, Switzerland}
\email{jehanne.dousse@math.uzh.ch,\ valentin.feray@math.uzh.ch}
\thanks{Both authors are partially supported by grant SNF-149461 from the Swiss National Science Foundation}
\keywords{skew shapes, standard Young tableaux, asymptotics, characters}
\subjclass[2010]{05A16,05E05,05E10}
\begin{document}

\begin{abstract}
  We are interested in the asymptotics of the number of standard Young tableaux $f^{\la/\mu}$ of a given skew shape $\la/\mu$.
  We mainly restrict ourselves to the case where both diagrams are balanced,
  but investigate all growth regimes of $|\mu|$ compared to $|\la|$, from $|\mu|$ fixed to $|\mu|$ of order $|\la|$.
  When $|\mu|=o(|\la|^{1/3})$, we get an asymptotic expansion to any order.
  When $|\mu|=o(|\la|^{1/2})$, we get a sharp upper bound.
  For bigger $|\mu|$, we prove a weaker bound and give a conjecture on what we believe to be the correct order of magnitude.
  
  Our results are obtained by expressing $f^{\la/\mu}$ in terms of irreducible character values of the symmetric group
  and applying known upper bounds on characters.
\end{abstract}

\maketitle

\section{Introduction and statement of results}
\subsection*{Background}
Standard Young tableaux of a given shape $\lambda$ are standard combinatorial objects
coming from the representation theory of symmetric groups and the theory of symmetric functions;
we refer the reader to \cite{AR15} for a recent survey on the topic.
The number $f^\la$ of such tableaux is given by the well-known hook-length formula of Frame,
Robinson and Thrall \cite{HookLengthFormula1954}.
This exact product formula is also suited for asymptotic analysis:
for example, if $\la$ has fewer than $L\sqrt{|\la|}$ rows and columns (for some constant $L$),
then we have 
\begin{equation}
  \log f^\la=\tfrac12 |\la| \log |\la| +\O(|\la|).
  \label{eq:asymp_fla}
\end{equation}
This formula (with a precise version of the $\O(|\la|)$) is a key ingredient
to find the limit shape of random Young diagrams distributed according to the Plancherel measure;
see \cite[Chapter 1]{RomikLIS} for an introduction to this wide subject.
\medskip

A natural generalization of the problem is to consider skew shapes $\la/\mu$,
that is the diagram obtained by removing a smaller diagram $\mu$
from the top-left corner of a bigger diagram $\la$.
The number of standard Young tableaux of shape $\la/\mu$ is usually denoted $f^{\la/\mu}$
and is an object of interest in algebraic combinatorics.
In general, there is no product formula for $f^{\la/\mu}$, but several papers have been devoted to finding special shapes for which 
product formulas hold.  The most recent result in this direction is a formula for a six-parameter family of skew shapes given by
Morales, Pak and Panova \cite{MPP3}, which generalizes previous results of Kim and Oh \cite{KimOh} and DeWitt \cite{DeWitt}.
Another interesting problem is the asymptotic analysis of $f^{\la/\mu}$,
which we shall discuss in this paper.
\medskip

Even in the simplest case of a (non-skew) shape $\la$, the asymptotics of $f^\la$ largely depend on how the shape $\la$ tends to infinity: 
does it have rows and/or columns of linear size (this is sometimes referred to
as the Thoma-Vershik-Kerov regime) or, at the opposite, are the numbers of rows and columns
of the same order as the square-root of the size?
Since the numbers $f^{\la/\mu}$ depend on two partitions, there are even more possible asymptotic regimes.
In this article, to simplify the discussion,
we mainly focus on the case where {\bf both diagrams $\la$ and $\mu$ are balanced}; 
i.e., with the exception of \cref{th: general1,th: k general2}, 
we assume throughout the paper that
there exists $L$ such that $\la$ (resp. $\mu$)
has less than $L\sqrt{|\la|}$ (resp. $L\sqrt{|\mu|}$) rows and columns.
$L$ should be considered fixed and all constants, 
including the ones in the $\O$ and $o$ symbols, might depend on $L$.
However, the methods developed here can be used in more generality,
as seen in \cref{th: general1,th: k general2} below.
\medskip

We start the discussion with the case where $\mu$ is a fixed partition.
It has been proved that
for balanced diagrams $\la$
we have
\begin{equation}
  A_{\lambda/ \mu} := |\mu|! \frac{f^{\lambda/\mu}}{f^{\lambda}f^{\mu}} \underset{|\la| \to \infty}{\sim} 1. 
  \label{eq:equiv_stanley}
\end{equation}
(In fact, we only need to assume the number of rows and columns to be sublinear here.)
This result was first discovered by the Russian school and appeared in Kerov's unpublished doctoral thesis;
it can also be easily inferred from results of Olshanski and Okounkov on shifted Schur functions \cite{OkOl1998}.
It was then independently rediscovered by Stanley \cite{Stanley}.
Finally, Corteel, Goupil and Schaeffer \cite{CorteelGoupilSchaeffer2004} gave an alternative proof of it
via content evaluation of symmetric functions.
All these works use (shifted) symmetric function manipulations and asymptotic results
for characters or symmetric function evaluations.

On the other side of the spectrum, Morales, Pak, Panova and Tassy  \cite{PakMP,MPTassy} investigated
various situations where the size $k$ of $\mu$ grows linearly with the size $n$ of $\la$.
In all cases with balanced diagrams, they obtained
\begin{equation}
 \log  f^{\lambda/\mu} = \tfrac{1}{2} |\la/\mu| \log |\la/\mu| +\O(|\la/\mu|),
   \label{eq:est_MPP}
 \end{equation}
and gave precise estimates for the $\O(|\la/\mu|)$ term.
In this framework, again, it seems that $A_{\lambda/ \mu} = |\mu|! \tfrac{f^{\lambda/\mu}}{f^{\lambda}f^{\mu}}$
is a meaningful normalization since $\log(A_{\lambda/ \mu})=\O(|\la/\mu|)$ while
all factors in the definition of $A_{\lambda/ \mu}$ are significantly bigger.
\medskip

The goal of this paper is to investigate the behaviour of $A_{\lambda/ \mu}$
in intermediate regimes, that is when $1 \ll |\mu|\ll |\la|$.
We get various results, depending on the growth of $k:=|\mu|$
compared to $n:=|\la|$.
\medskip

\subsection*{A probabilistic interpretation of $A_{\lambda/\mu}$}
Before stating our result, let us present a motivation 
for studying $f_{\la/\mu}$ asymptotically through the quotient $A_{\lambda/\mu}$.
For a partition $\la$, we consider a uniform random standard tableau $T_\lambda$ of shape $\lambda$.
Large random standard tableaux have been studied in the probabilistic literature,
see for instance \cite{mckay02tableaux,olshanski01tableaux,pittel07tableaux,sun18tableaux}
or \cite{biane98free,sniady06gaussian} for the equivalent problem of studying random irreducible components
of restrictions of $S_n$-representations.
We also mention that Stanley's motivation for proving \eqref{eq:equiv_stanley} was to re-prove and improve
some results on random tableaux from \cite{mckay02tableaux}.

One way to study such tableaux is to consider their {\em level sets}, i.e., given $k$,
the diagram $L^{(k)}(T_\lambda)$ consisting of boxes with entries at most $k$ in $T_\lambda$.
With basic combinatorics, we see that, for given $\mu$ and $\lambda$, we have
\[ \mathbb P\big( L^{(k)}(T_\lambda)=\mu \big) = 
\frac{f^\mu\, f^{\la/\mu}}{f^\la} = A_{\lambda/ \mu} \frac{(f^\mu)^2}{|\mu|!}. \]
Therefore, the quantity $A_{\lambda/ \mu}$ is a correction factor
between the distribution of $L^{(k)}(T_\lambda)$
and the well-studied Plancherel measure on Young diagrams of size $k$.
With this viewpoint, \cref{eq:equiv_stanley} means that, 
for fixed $k$, the diagram $L^{(k)}(T_\lambda)$ is asymptotically Plancherel distributed.
We extend this result below to the range $k=o \left(n^{1/3}\right)$ 
and analyse the correction factor $A_{\lambda/ \mu}$ for larger values of $k$.
\medskip

\subsection*{Results}
When $k= o \left(n^{1/3}\right)$, we get an asymptotic expansion of $A_{\lambda/ \mu}$
to any order.
This extends the results of Kerov-Stanley for fixed $k$
(see the discussion after Theorem 3.2 in \cite{Stanley}).
The terms in this expansion involve characters of the symmetric group,
so we first need to introduce some terminology.
For a permutation $\sigma$,
we denote by $\ell_T(\sigma)$ its {\em absolute length}, i.e the minimal number of transpositions (not necessarily adjacent)
needed to factorize $\sigma$.
Also, $\chi^\lambda(\sigma)$ is the character of the irreducible symmetric group representation
associated with $\lambda$ evaluated on $\sigma$.
(If $\lambda$ has size $n$ and $\sigma$ is a permutation in the symmetric group $S_k$ with $k <n$,
we implicitly use the injection $S_k \subset S_n$ consisting in fixing integers $j>k$.)
\begin{theorem}
\label{th: k small}
Let  $\lambda \vdash n$ and $\mu \vdash k$ be balanced, with 
$k = o\left(n^{1/3 } \right)$. Then for any natural integer $r$ (not depending on $k$ and $n$), we have as $n$ tends to infinity,
$$A_{\lambda/ \mu} = \sum_{ \substack{\sigma \in S_k,\\ \ell_T( \sigma) \leq r}} \frac{\chi^{\lambda}(\sigma)}{f^{\lambda}} \frac{\chi^{\mu}(\sigma)}{f^{\mu}} + \O\left( \left( k^{\frac{3}{2}}n^{- \frac{1}{2}} \right)^{r+1} \right).$$
\end{theorem}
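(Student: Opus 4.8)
The plan is to reduce $A_{\la/\mu}$ to an \emph{exact} sum over $S_k$ of products of normalized characters, and then to control the tail of this sum using known character bounds.

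\emph{Step 1: an exact character identity.} First I would rewrite $f^{\la/\mu}$ via symmetric functions. Since $s_{\la/\mu}=\sum_\nu c^{\la}_{\mu\nu}s_\nu$ and $f^\nu=\langle s_\nu,p_1^{|\nu|}\rangle$, we have $f^{\la/\mu}=\langle s_{\la/\mu},p_1^{n-k}\rangle=\langle s_\la,\,s_\mu\,p_1^{n-k}\rangle$, using the adjunction between multiplication and skewing. Expanding $s_\mu=\sum_{\rho\vdash k}z_\rho^{-1}\chi^\mu_\rho\,p_\rho$ in the power-sum basis and using $\langle s_\la,p_{\rho\cup 1^{n-k}}\rangle=\chi^\la_{\rho\cup 1^{n-k}}$, I would obtain
\[
  f^{\la/\mu}=\sum_{\rho\vdash k}\frac{\chi^\mu_\rho\,\chi^\la_{\rho\cup 1^{n-k}}}{z_\rho}
           =\frac{1}{k!}\sum_{\sigma\in S_k}\chi^\mu(\sigma)\,\chi^\la(\sigma),
\]
where the last equality groups permutations by cycle type and $\chi^\la(\sigma)$ uses the embedding $S_k\subset S_n$. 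Dividing by $f^\la f^\mu/k!$ yields the exact identity
\[
  A_{\la/\mu}=\sum_{\sigma\in S_k}\frac{\chi^\la(\sigma)}{f^\la}\,\frac{\chi^\mu(\sigma)}{f^\mu},
\]
so the theorem is precisely the assertion that the terms with $\ell_T(\sigma)>r$ contribute $\O\big((k^{3/2}n^{-1/2})^{r+1}\big)$.

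\emph{Step 2: bounding a single term.} Here I would invoke the known upper bounds on normalized characters of balanced diagrams: there is a constant $C=C(L)$ with $|\chi^\la(\sigma)/f^\la|\le (C n^{-1/2})^{\ell_T(\sigma)}$ and $|\chi^\mu(\sigma)/f^\mu|\le (C k^{-1/2})^{\ell_T(\sigma)}$ for all $\sigma\in S_k$. (For a single transposition these reflect that the normalized character is a content sum of order $n^{3/2}/\binom n2=\O(n^{-1/2})$ for a balanced shape.) Hence each term with $\ell_T(\sigma)=\ell$ is at most $\big(C^2(nk)^{-1/2}\big)^{\ell}$ in absolute value.

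\emph{Step 3: counting and summing.} The number of $\sigma\in S_k$ with $\ell_T(\sigma)=\ell$ is the signless Stirling number of the first kind $c(k,k-\ell)$, the coefficient of $x^{k-\ell}$ in $\prod_{i=0}^{k-1}(x+i)$, hence equal to $e_\ell(1,\dots,k-1)\le (\binom k2)^\ell/\ell!$. Combining this with Step 2, the tail is bounded by
\[
  \sum_{\ell>r}c(k,k-\ell)\,\big(C^2(nk)^{-1/2}\big)^{\ell}
   \le\sum_{\ell>r}\frac{1}{\ell!}\Big(\tfrac{C^2}{2}\,k^{3/2}n^{-1/2}\Big)^{\ell},
\]
since $\binom k2\,(nk)^{-1/2}=\O(k^{3/2}n^{-1/2})$. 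Under the hypothesis $k=o(n^{1/3})$ the quantity $k^{3/2}n^{-1/2}$ tends to $0$, so this series is dominated by its first term and equals $\O\big((k^{3/2}n^{-1/2})^{r+1}\big)$, giving the claimed error.

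\emph{Main obstacle.} Everything hinges on the quantitative character estimate of Step 2, with the sharp exponent $n^{-1/2}$ and a constant uniform over balanced shapes; this is exactly where the threshold $k=o(n^{1/3})$ enters, as the smallness condition ensuring convergence. The final exponents $3/2$ and $-1/2$ are forced by the interplay between the $n^{-1/2}$ decay of $\chi^\la/f^\la$, the matching $k^{-1/2}$ decay of $\chi^\mu/f^\mu$, and the $\O(k^{2\ell})$ growth of the number of permutations of absolute length $\ell$. The delicate point is to combine these three factors sharply rather than up to spurious polynomial losses; the factor $1/\ell!$ coming from the Stirling-number bound is what makes the tail summable and of the exact order of its first omitted term.
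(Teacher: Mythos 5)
Your Steps 1 and 3 are sound: Step 1 is a correct derivation of the paper's Lemma \ref{lem:stanley} (essentially Stanley's original symmetric-function argument, where the paper instead iterates the branching rule and uses orthogonality), and your Stirling-number count $c(k,k-\ell)\le \binom{k}{2}^{\ell}/\ell!$ is a slightly sharper form of the paper's Lemma \ref{lem:nbperm}. The gap is in Step 2, and it is fatal as written. You invoke as ``known'' the uniform bound $|\chi^{\mu}(\sigma)/f^{\mu}|\le (Ck^{-1/2})^{\ell_T(\sigma)}$ for \emph{all} $\sigma\in S_k$. No such bound is known. The actual character estimate (Theorem \ref{th:valpiotr}, F\'eray--\'Sniady) reads
\[
\left|\frac{\chi^{\mu}(\sigma)}{f^{\mu}}\right|\le \left[a\max\left(\frac{r(\mu)}{k},\frac{c(\mu)}{k},\frac{\ell_T(\sigma)}{k}\right)\right]^{\ell_T(\sigma)},
\]
which gives your $(Ck^{-1/2})^{\ell_T(\sigma)}$ only when $\ell_T(\sigma)\le L\sqrt{k}$; for longer permutations (and $\ell_T(\sigma)$ can be as large as $k-1$) it degrades to $(a\,\ell_T(\sigma)/k)^{\ell_T(\sigma)}$, which is much weaker. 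The degradation is not an artifact: if your claimed uniform bound held, then summing over all of $S_k$ exactly as in your Step 3 would give $A_{\lambda/\mu}\le e^{O(k^{3/2}n^{-1/2})}$ for \emph{every} $k\le n$, i.e.\ the upper bound of Conjecture \ref{conj}, which the paper states as open; moreover Section \ref{sec:limits} of the paper shows that combining \emph{all} character bounds in the literature cannot beat $e^{k(\log(k^2/n)+\O(1))}$. So Step 2 assumes something strictly stronger than what is available (and possibly false).

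The repair is the extra splitting the paper performs in the proof of Theorem \ref{th: general1}: separate the tail into $r<\ell_T(\sigma)\le L\sqrt{k}$ (where your Step 2 bound for $\mu$ is legitimate, yielding your exponential-tail estimate) and $\ell_T(\sigma)>L\sqrt{k}$. In the second range one must use the degraded bound $(a\,\ell_T(\sigma)/k)^{\ell_T(\sigma)}$ for $\chi^{\mu}$, while the good bound $(aLn^{-1/2})^{\ell_T(\sigma)}$ remains valid for $\chi^{\lambda}$ because $\ell_T(\sigma)\le k=o(n^{1/3})\le L\sqrt{n}$. Together with $i!\ge (i/e)^i$ and the count $k^{2i}/i!$, this range is dominated by a convergent geometric series $\sum_{i>L\sqrt{k}}\left(a^2eLk n^{-1/2}\right)^{i}$, whose leading exponent $L\sqrt{k}+1$ eventually exceeds $r+1$, so this contribution is negligible compared to the $\O\left(\left(k^{3/2}n^{-1/2}\right)^{r+1}\right)$ error from the first range. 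Without this case distinction your argument proves more than the theorem and silently resolves the paper's conjecture, which is the clearest sign that a step is unjustified.
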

Characters on permutations of short absolute lengths have explicit expressions,
which allow to give character-free versions of the above asymptotic estimate for small values of $r$.
For $r=0$, the only permutation of absolute length $0$ being the identity, we get the following extension
of Kerov-Stanley's result.
\begin{corollary}
\label{cor: k<1/3}
Let  $\lambda \vdash n$ and $\mu \vdash k$ be balanced, with 
$k = o\left(n^{1/3 } \right)$. Then we have
$$A_{\lambda/ \mu} = 1 + \O\left(k^{\frac{3}{2}}n^{- \frac{1}{2}}\right).$$
In other words,
$$f^{\lambda/ \mu} = \frac{f^{\lambda}f^{\mu}}{k!} \left(1+\O\left(k^{\frac{3}{2}}n^{- \frac{1}{2}}\right) \right) .$$
\end{corollary}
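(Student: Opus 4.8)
The plan is to specialize \cref{th: k small} to the case $r=0$; the entire analytic content—the expansion of $A_{\lambda/\mu}$ over permutations of bounded absolute length together with the character bounds controlling the remainder—is already contained in that theorem, so the only thing left to do is identify the resulting one-term sum. A permutation $\sigma$ satisfies $\ell_T(\sigma)\le 0$ exactly when no transposition is needed to factorize it, that is, when $\sigma$ is the identity. Equivalently, using $\ell_T(\sigma)=k-(\text{number of cycles of }\sigma)$ for $\sigma\in S_k$, the identity (with its $k$ fixed points) is the unique permutation attaining $\ell_T=0$, since any transposition merges two cycles and raises the length to $1$. Hence the sum in \cref{th: k small} collapses to the single term indexed by $\sigma=\mathrm{id}$.

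For that term I would invoke the elementary fact that an irreducible character evaluated at the identity equals the dimension of the representation, so $\chi^{\la}(\mathrm{id})=f^{\la}$ and $\chi^{\mu}(\mathrm{id})=f^{\mu}$ (this is consistent with the embedding $S_k\subset S_n$, as the identity of $S_k$ maps to that of $S_n$). The summand is therefore
\[
\frac{\chi^{\la}(\mathrm{id})}{f^{\la}}\,\frac{\chi^{\mu}(\mathrm{id})}{f^{\mu}}=1,
\]
and the error bound $\O\big((k^{3/2}n^{-1/2})^{r+1}\big)$ becomes $\O(k^{3/2}n^{-1/2})$ at $r=0$. This immediately yields the first estimate $A_{\lambda/\mu}=1+\O(k^{3/2}n^{-1/2})$.

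The second formulation is then a purely formal rewriting: unfolding the definition $A_{\lambda/\mu}=|\mu|!\,f^{\la/\mu}/(f^{\la}f^{\mu})=k!\,f^{\la/\mu}/(f^{\la}f^{\mu})$ and solving for $f^{\la/\mu}$ gives
\[
f^{\la/\mu}=\frac{f^{\la}f^{\mu}}{k!}\bigl(1+\O(k^{3/2}n^{-1/2})\bigr).
\]
There is no genuine obstacle at the level of the corollary: it is literally the $r=0$ instance of \cref{th: k small}, and the only points to make explicit are the identification of the summation index set $\{\sigma:\ell_T(\sigma)\le 0\}=\{\mathrm{id}\}$ and the normalization $\chi^{\la}(\mathrm{id})=f^{\la}$. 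If anything were to require care, it would be checking that the hypothesis $k=o(n^{1/3})$ is indeed what makes the remainder $\O(k^{3/2}n^{-1/2})$ tend to $0$, but this is already guaranteed by the regime assumed in the theorem.
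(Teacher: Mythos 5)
Your proof is correct and matches the paper's own argument exactly: the corollary is obtained by specializing \cref{th: k small} to $r=0$, noting that the identity is the unique permutation of absolute length $0$ and that its summand $\chi^{\lambda}(\mathrm{id})\chi^{\mu}(\mathrm{id})/(f^{\lambda}f^{\mu})$ equals $1$. The final rewriting of $A_{\lambda/\mu}$ in terms of $f^{\lambda/\mu}$ is the same formal unfolding of the definition used in the paper.
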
\smallskip

To state the case $r=1$, we denote $b(\la)=\sum_{i \ge 1} (i-1)\la_i$ and recall that,
for any transposition $\tau$, we have
\[\frac{\chi^{\lambda}(\tau)}{f^{\lambda}}=\frac{2}{n(n-1)}(b(\la')-b(\la)).\]
See, {\em e.g.}, \cite[Section I.7, example 7]{McDo}; here, $\la'$ is as usual the conjugate of $\la$.
The permutations of absolute length at most $1$ in $S_k$ are the identity and $\frac{k(k-1)}{2}$
transpositions, so that we have the following.
\begin{corollary}
\label{cor: k<1/3'}
Let  $\lambda \vdash n$ and $\mu \vdash k$ be balanced, with 
$k = o\left(n^{1/3 } \right)$. Then we have
$$A_{\lambda/ \mu} = 1 + \frac{2}{n(n-1)} (b(\la')-b(\la))(b(\mu')-b(\mu))+\O\left(k^3 n^{-1}\right).$$
\end{corollary}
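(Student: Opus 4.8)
The plan is to specialize \cref{th: k small} to the case $r=1$ and evaluate the resulting finite sum explicitly. With $r=1$ the error term $\O\left(\left(k^{3/2}n^{-1/2}\right)^{r+1}\right)$ becomes $\O\left(k^3 n^{-1}\right)$, which is precisely the error claimed in the corollary, so it remains only to compute the main sum over permutations $\sigma \in S_k$ with $\ell_T(\sigma) \le 1$.

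The permutations of absolute length at most $1$ in $S_k$ are the identity and the $\frac{k(k-1)}{2}$ transpositions. The identity contributes $\frac{\chi^{\la}(\mathrm{id})}{f^{\la}}\frac{\chi^{\mu}(\mathrm{id})}{f^{\mu}}=1$, accounting for the leading $1$ in the statement. For the transpositions, I would first observe that for any partition $\nu$ the ratio $\chi^{\nu}(\sigma)/f^{\nu}$ depends only on the conjugacy class of $\sigma$; hence every transposition yields the same summand, which is what allows the sum to be replaced by a count times a single value.

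The one point requiring care is the embedding $S_k\subset S_n$ built into \cref{th: k small}: a transposition $\tau\in S_k$ remains a transposition when viewed in $S_n$, so the quoted formula $\chi^{\la}(\tau)/f^{\la} = \frac{2}{n(n-1)}(b(\la')-b(\la))$ applies with the parameter $n$, whereas for the $\mu$-factor one uses the same formula with $k$ in place of $n$, namely $\chi^{\mu}(\tau)/f^{\mu} = \frac{2}{k(k-1)}(b(\mu')-b(\mu))$. Multiplying the two ratios and summing over all $\frac{k(k-1)}{2}$ transpositions, the prefactors $\frac{k(k-1)}{2}$ and $\frac{2}{k(k-1)}$ cancel, leaving exactly $\frac{2}{n(n-1)}(b(\la')-b(\la))(b(\mu')-b(\mu))$. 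Adding the identity contribution of $1$ then gives the asserted expansion.

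There is no genuine obstacle beyond careful bookkeeping: keeping the two roles of $n$ and $k$ distinct in the transposition formula, and tracking the cancellation of the $k(k-1)$ factors so that only the $n$-dependent denominator survives in the second-order term.
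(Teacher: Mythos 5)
Your proposal is correct and follows exactly the paper's own route: specialize \cref{th: k small} to $r=1$, use the conjugacy-class formula $\chi^{\nu}(\tau)/f^{\nu}$ at a transposition for both $\la$ (with parameter $n$) and $\mu$ (with parameter $k$), and observe that the $\frac{k(k-1)}{2}$ count of transpositions cancels the $\frac{2}{k(k-1)}$ from the $\mu$-factor. The bookkeeping you highlight (keeping $n$ and $k$ distinct, and the embedding $S_k\subset S_n$ preserving transpositions) is precisely the content of the paper's argument preceding the corollary.
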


\medskip

We now discuss what happens for larger values of $k$.
When $k$ is at most of order $n^{1/2}$, we can prove the following upper bound.
\begin{theorem}
\label{th: k medium}
There exist positive constants $C_1$ and $C_2$ such that the following holds.
Let  $\lambda \vdash n$ and $\mu \vdash k$ be balanced, with $k < C_1 n^{1/2}$. 
Then we have
$$A_{\lambda/ \mu} \leq e^{C_2 k^{\frac{3}{2}} n^{- \frac{1}{2}}}.$$
\end{theorem}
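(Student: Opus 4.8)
The plan is to start from the exact character expansion of $A_{\lambda/\mu}$ that underlies \cref{th: k small}, and then to estimate the full (untruncated) sum using character bounds applied to \emph{both} partitions. Expanding the class function $\sigma \mapsto \chi^\lambda(\sigma)$ (with $\sigma \in S_k$ embedded in $S_n$ by fixing $k+1,\dots,n$) in the basis of irreducible characters of $S_k$ via the branching rule, and using orthogonality of characters over $S_k$, one gets the identity
$$f^{\lambda/\mu} = \frac{1}{k!}\sum_{\sigma \in S_k}\chi^\lambda(\sigma)\,\chi^\mu(\sigma),$$
and therefore the exact formula
$$A_{\lambda/\mu} = \sum_{\sigma \in S_k}\frac{\chi^\lambda(\sigma)}{f^\lambda}\,\frac{\chi^\mu(\sigma)}{f^\mu}.$$
\cref{th: k small} is exactly the truncation of this sum to $\ell_T(\sigma)\le r$; here I instead keep all terms and bound each of them in absolute value.

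First I would invoke the upper bound on normalized characters for balanced diagrams (the same bound used to control the remainder in \cref{th: k small}), applied simultaneously to $\lambda$ and to $\mu$. For $\lambda\vdash n$ balanced this produces a constant $C$ with $|\chi^\lambda(\sigma)/f^\lambda|\le (C/\sqrt n)^{\ell_T(\sigma)}$ for every $\sigma\in S_k$, the hypothesis $k<C_1 n^{1/2}$ ensuring that $\sigma$, whose support is at most $k$, lies in the range where the bound applies. The crucial point is that $\mu\vdash k$ is \emph{also} balanced, so the same statement in $S_k$ gives a constant $C'$ with $|\chi^\mu(\sigma)/f^\mu|\le (C'/\sqrt k)^{\ell_T(\sigma)}$. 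Multiplying, each summand is at most $\big(CC'/\sqrt{nk}\big)^{\ell_T(\sigma)}$.

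Next I would sum over $S_k$ grouping permutations by absolute length. Since $\ell_T(\sigma)=k-(\text{number of cycles of }\sigma)$, the relevant generating function is the rising factorial: for any $x\ge 0$,
$$\sum_{\sigma\in S_k}x^{\ell_T(\sigma)}=\prod_{j=1}^{k-1}(1+jx).$$
Taking $x=CC'/\sqrt{nk}$ yields
$$A_{\lambda/\mu}\le \prod_{j=1}^{k-1}\Big(1+\frac{jCC'}{\sqrt{nk}}\Big),$$
and then $\log(1+y)\le y$ together with $\sum_{j=1}^{k-1}j\le k^2/2$ gives
$$\log A_{\lambda/\mu}\le \frac{CC'}{\sqrt{nk}}\cdot\frac{k^2}{2}=\frac{CC'}{2}\,k^{3/2}n^{-1/2},$$
which is the claimed estimate with $C_2=CC'/2$.

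The main obstacle, and really the heart of the statement, is obtaining the exponent $k^{3/2}$ rather than $k^{2}$. Using only the trivial bound $|\chi^\mu(\sigma)/f^\mu|\le 1$ for the $\mu$-factor would replace $x$ by $C/\sqrt n$ and give $\log A_{\lambda/\mu}=\O(k^2 n^{-1/2})$, which is too weak; the extra factor $k^{-1/2}$ is gained precisely by exploiting that $\mu$ is balanced, so that its normalized characters also decay like $k^{-\ell_T(\sigma)/2}$. I would therefore need to check carefully that the character bound genuinely applies to $\mu$ for \emph{all} $\sigma\in S_k$ (including permutations of large support, up to the full support $k$) and that the resulting constant can be taken uniform in $k$. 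This is where the balancedness hypothesis on $\mu$ and the restriction $k<C_1 n^{1/2}$ are really used.
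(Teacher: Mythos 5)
Your starting identity is the paper's \cref{lem:stanley}, and your way of summing over $S_k$ by absolute length via $\sum_{\sigma\in S_k}x^{\ell_T(\sigma)}=\prod_{j=1}^{k-1}(1+jx)$ is correct (and even sharper than the counting bound \cref{lem:nbperm} that the paper uses). The gap is the uniform character estimate you apply to $\mu$: the claim that balancedness gives $\left|\chi^\mu(\sigma)/f^\mu\right|\le (C'/\sqrt{k})^{\ell_T(\sigma)}$ for \emph{every} $\sigma\in S_k$. The bound actually available (\cref{th:valpiotr}, specialised in \cref{lem:boundchar}) is $\left[a\max\left(\frac{r(\mu)}{k},\frac{c(\mu)}{k},\frac{\ell_T(\sigma)}{k}\right)\right]^{\ell_T(\sigma)}$, and for balanced $\mu$ this yields the decay $(aL/\sqrt{k})^{\ell_T(\sigma)}$ only in the range $\ell_T(\sigma)\le L\sqrt{k}$. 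Permutations of $S_k$ have absolute length up to $k-1$, and as soon as $\ell_T(\sigma)>L\sqrt{k}$ the maximum is attained by $\ell_T(\sigma)/k$, so the bound degrades to $\left(a\,\ell_T(\sigma)/k\right)^{\ell_T(\sigma)}$, which for $\ell_T(\sigma)$ of order $k$ is of order $a^{\ell_T(\sigma)}\ge 1$ (recall $a>1$), nothing like $k^{-\ell_T(\sigma)/2}$. No hypothesis of the form $k<C_1n^{1/2}$ can rescue this step: the failure concerns only $\mu$ and $S_k$ and is independent of $n$. (In your draft the hypothesis on $k$ is used only to make the $\lambda$-bound $(aL/\sqrt{n})^{\ell_T(\sigma)}$ valid for all $\sigma\in S_k$, which is indeed fine once $C_1\le L$.) You flag precisely this verification in your last paragraph, but it is not a routine check: it fails, and a bound of the strength you assume is not available in the literature --- indeed \cref{sec:limits} of the paper shows that all known character bounds are too weak on permutations of large absolute length, which is exactly why \cref{th: k big} is weaker than \cref{th: k medium}. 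As written, your argument proves the desired bound only for the partial sum over $\{\sigma:\ \ell_T(\sigma)\le L\sqrt{k}\}$.

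The missing step --- and the paper's actual proof, via \cref{th: k general2} --- is to split the sum at $\ell_T(\sigma)=L\sqrt{k}$. On $\{\sigma:\ \ell_T(\sigma)\le L\sqrt{k}\}$ your computation applies essentially verbatim and yields $e^{\O(k^{3/2}n^{-1/2})}$. On the tail one applies \eqref{eq:1} to $\lambda$ but the degraded bound \eqref{eq:4} to $\mu$, together with \cref{lem:nbperm} and $i!\ge i^ie^{-i}$; each term with $\ell_T(\sigma)=i$ then contributes at most $\left(a^2Lek\,n^{-1/2}\right)^i$, and this geometric series has ratio $<1$ precisely because $k<C_1n^{1/2}$ with $C_1=(a^2Le)^{-1}$ --- this is the real place where the hypothesis on $k$ enters. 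Its sum, starting at $i=L\sqrt{k}+1$, is then bounded as in \eqref{eq:boundS2} (in fact it is $o(1)$), and adding the two pieces gives \cref{th: k medium}.
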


The bound of Theorem \ref{th: k medium} is in some sense tight; in \cref{sec:numerics}, we give families of skew shapes for which $\log A_{\lambda/ \mu} = \Theta \left(|\mu|^{\frac{3}{2}}|\la|^{-\frac{1}{2}}\right)$ (here $f = \Theta (g)$ means that there exists a constant $C>0$ such that $f=Cg +o(g)$).
These skew shapes have been chosen so that
$f^{\la/\mu}$ (and hence $A_{\la/\mu}$) admits a product formula
and is therefore easily computable.
Even if this product formula is explicit, the derivation of its asymptotics is cumbersome 
and was done on a computer.
\medskip

We next investigate the case where $k \geq C_1 n^{1/2}$.
In this case, we can only prove the following upper bound.
\begin{theorem}
\label{th: k big}
Let  $\lambda \vdash n$ and $\mu \vdash k$ be balanced, with $k \geq C_1 n^{1/2}$.
Then we have
$$A_{\lambda/ \mu} \leq e^{ k\left( \log \frac{k^2}{n} + \O(1) \right)}.$$
\end{theorem}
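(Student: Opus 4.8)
The plan is to prove Theorem~\ref{th: k big} by the same character-expansion philosophy used for the smaller regimes, but now extracting only an upper bound rather than an asymptotic expansion. Recall that $A_{\lambda/\mu} = \sum_{\sigma \in S_k} \frac{\chi^\lambda(\sigma)}{f^\lambda}\,\frac{\chi^\mu(\sigma)}{f^\mu}$, where the sum is over \emph{all} of $S_k$ (this identity is what underlies \cref{th: k small}, with the truncation to $\ell_T(\sigma) \le r$ being the approximation step there). Taking absolute values termwise gives $A_{\lambda/\mu} \le \sum_{\sigma \in S_k} \left|\frac{\chi^\lambda(\sigma)}{f^\lambda}\right| \cdot \left|\frac{\chi^\mu(\sigma)}{f^\mu}\right|$. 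The two factors are the normalized character values of the balanced diagrams $\lambda$ and $\mu$, and the key tool will be the known upper bounds on normalized characters advertised in the abstract, which control $\left|\frac{\chi^\lambda(\sigma)}{f^\lambda}\right|$ in terms of $n = |\lambda|$ and the cycle structure (equivalently the support or the absolute length) of $\sigma$.

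First I would recall the relevant character bound in the form most convenient here: for a balanced diagram $\lambda \vdash n$ and a permutation $\sigma$ moving $m$ points (so with $m - \ell_T(\sigma)$ cycles among the moved points), one has an estimate of the shape $\left|\frac{\chi^\lambda(\sigma)}{f^\lambda}\right| \le (C/\sqrt{n})^{\,\ell_T(\sigma)}$ up to lower-order corrections, since each transposition in a minimal factorization contributes a factor of order $n^{-1/2}$ for a balanced shape. The crude bound $\left|\frac{\chi^\mu(\sigma)}{f^\mu}\right| \le 1$ would waste too much, so I would instead use the genuinely small character bound for $\mu$ as well: since $\mu$ is balanced with $|\mu| = k$, we have $\left|\frac{\chi^\mu(\sigma)}{f^\mu}\right| \le (C/\sqrt{k})^{\ell_T(\sigma)}$ up to corrections. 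Combining the two, each term is bounded by roughly $\left(C^2 / \sqrt{nk}\right)^{\ell_T(\sigma)}$.

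Next I would organize the sum by absolute length. Write $A_{\lambda/\mu} \le \sum_{j=0}^{k-1} N_j \left(\frac{C}{\sqrt{n}}\right)^{j}\left(\frac{C}{\sqrt{k}}\right)^{j}$, where $N_j$ is the number of permutations in $S_k$ with $\ell_T(\sigma) = j$. The crucial combinatorial input is a bound on $N_j$: a permutation of absolute length $j$ is a product of $j$ transpositions, so $N_j \le \binom{k}{2}^j / j! \le (k^2/2)^j / j!$ (choosing an ordered minimal factorization and dividing out by the $j!$ orderings, or more carefully counting via the factorization structure). Inserting this gives $A_{\lambda/\mu} \le \sum_{j \ge 0} \frac{1}{j!}\left(\frac{C^2 k^2}{2\sqrt{nk}}\right)^{j} = \exp\!\left(\frac{C^2 k^2}{2\sqrt{nk}}\right) = \exp\!\left(\tfrac{C^2}{2}\,k^{3/2} n^{-1/2}\right)$. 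This recovers the \emph{sharp} bound of \cref{th: k medium} in the regime where the argument of the exponential is controlled, i.e.\ when $k^{3/2} n^{-1/2}$ is not too large. The point of the separate statement \cref{th: k big} is that once $k \ge C_1 n^{1/2}$, the quantity $k^{3/2} n^{-1/2} \ge C_1^{3/2} k$ grows at least linearly in $k$, and the balanced-character estimate with its $(C/\sqrt{\cdot})^{\ell_T}$ decay is no longer the dominant effect; the relevant scale becomes $k \log(k^2/n)$.

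The main obstacle, and the reason \cref{th: k big} is stated with the coarser exponent $k(\log\frac{k^2}{n} + \O(1))$, is that in this large-$k$ regime the naive termwise character bounds no longer sum to something sharp, and one must more carefully track the interplay between the number of permutations of a given length and the decay of the characters. Concretely, for $j$ close to $k$ the bound $N_j \le (k^2/2)^j/j!$ combined with the character decay $(C^2/\sqrt{nk})^j$ produces terms of order $\exp(j \log(C^2 k^2/(j\sqrt{nk})))$, and optimizing over $j$ (by Stirling, the sum is dominated near $j \asymp k$ once $k^2 \gtrsim n$) yields the $k \log(k^2/n)$ behaviour rather than $k^{3/2}n^{-1/2}$. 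I would therefore handle the two ranges of $j$ separately: for small $j$ the geometric-type estimate above applies and contributes only a bounded multiplicative constant after the regime change, while for large $j$ the Stirling analysis of $N_j (C^2/\sqrt{nk})^j$ gives the governing exponential $e^{k(\log(k^2/n)+\O(1))}$. Assembling these two contributions and absorbing all $L$-dependent constants into the $\O(1)$ yields the claimed bound.
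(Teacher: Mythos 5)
Your overall strategy (character expansion via \cref{lem:stanley}, absolute values termwise, grouping permutations by absolute length) is the same as the paper's, but the central analytic input you invoke is not available, and this is a genuine gap. You assert that for a balanced diagram $\mu \vdash k$ one has $\left|\chi^{\mu}(\sigma)/f^{\mu}\right| \leq (C/\sqrt{k})^{\ell_T(\sigma)}$ for \emph{all} $\sigma \in S_k$ (``each transposition contributes a factor of order $k^{-1/2}$''), and similarly for $\lambda$. The bound that actually exists (\cref{th:valpiotr}) has the extra term $\ell_T(\sigma)/m$ inside the maximum: it gives the $(a\gamma(m)/m)^{\ell_T(\sigma)}$ decay only when $\ell_T(\sigma) \leq \gamma(m)$, and degrades to $(a\,\ell_T(\sigma)/m)^{\ell_T(\sigma)}$ beyond that point --- this is exactly the dichotomy between \eqref{eq:1} and \eqref{eq:4}. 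In the regime of \cref{th: k big} this degradation is unavoidable and is the entire source of the weaker exponent: for $\sigma$ with $\ell_T(\sigma)$ of order $k$, the known bound for the $\mu$-character is only about $a^{\ell_T(\sigma)}=e^{\O(k)}$ (it can even exceed $1$, since $a>1$), not $e^{-\Theta(k\log k)}$ as you claim. Indeed, if your claimed bound were true, your own computation in the third paragraph would give $A_{\lambda/\mu} \leq \exp\left(\O\left(k^{3/2}n^{-1/2}\right)\right)$ with \emph{no} restriction on $k$, i.e.\ the upper bound of \cref{conj} --- which the paper states is open, and which Section \ref{sec:limits} shows cannot be obtained from any character bound known to date (the proposition there exhibits a large family of permutations of length $\Theta(k)$ on which every known bound for the $\mu$-character is at least $C^k$, forcing $B_{\lambda/\mu} \geq e^{k(\log(k^2/n)+\O(1))}$).

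Your final paragraph is also internally inconsistent: with the bounds you assumed, the term at $j \approx k$ is of order $\left(C^2 e \sqrt{k/n}\right)^k = e^{\O(k)}$ (recall $k \leq n$), so no Stirling optimization over $j$ can produce the exponent $k\log(k^2/n)$; that exponent appears only when one inserts the \emph{degraded} bounds $(aj/n)^j$ and $(aj/k)^j$ for both diagrams. The paper's actual proof does precisely this: it splits the sum at $\ell_T(\sigma) = L\sqrt{k}$ and at $\ell_T(\sigma) = L\sqrt{n}$, uses \eqref{eq:1} for each diagram only while $\ell_T(\sigma)$ is below that diagram's row/column bound and \eqref{eq:4} afterwards, and finds that the top range $\ell_T(\sigma) > L\sqrt{n}$, where both bounds are degraded, dominates, giving $\sum_i \left(a^2 e k^2/n\right)^i \leq e^{k\left(\log \frac{k^2}{n}+\O(1)\right)}$. (Your counting estimate $N_j \leq \binom{k}{2}^j/j!$ is correct and slightly sharpens \cref{lem:nbperm}, though your justification by ``dividing by $j!$ orderings'' needs care; in any case it cannot compensate for the character bound you do not have.)
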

Note that when $k$ is of order $n^{1/2}$, the upper bound in \cref{th: k medium}
is $e^{\Theta(n^{1/4})}$, while the one in \cref{th: k medium} is $e^{\Theta(n^{1/2})}$,
raising the question of the existence of a sudden change in the asymptotic behavior of $A_{\lambda/ \mu}$.
We believe that such a change does not exist,
but that this is rather an artifact of our method.
In fact, we conjecture that the bound given in \cref{th: k medium}
holds without hypothesis on $k$:

\begin{conjecture}
Let  $\lambda \vdash n$ and $\mu \vdash k$ be balanced. Then we have
$$e^{-C k^{\frac{3}{2}} n^{- \frac{1}{2}}} \leq A_{\lambda/ \mu} \leq e^{C k^{\frac{3}{2}} n^{- \frac{1}{2}}},$$
for some positive constant $C$. 
\label{conj}
\end{conjecture}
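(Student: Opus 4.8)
\noindent\emph{Proof strategy.}
The natural starting point is the exact character expansion of which \cref{th: k small} records the partial sums, namely the identity
\[
A_{\la/\mu} = \sum_{\sigma \in S_k} \frac{\chi^{\la}(\sigma)}{f^{\la}}\,\frac{\chi^{\mu}(\sigma)}{f^{\mu}}.
\]
For the upper bound I would group the permutations $\sigma \in S_k$ according to their absolute length $\ell_T(\sigma)$ and estimate each normalized character by the bounds already used to prove \cref{th: k medium}: for balanced diagrams one has estimates of the form $|\chi^{\la}(\sigma)/f^{\la}| \le (C/\sqrt n)^{\ell_T(\sigma)}$ and $|\chi^{\mu}(\sigma)/f^{\mu}| \le (C/\sqrt k)^{\ell_T(\sigma)}$. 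Since the number of permutations of $S_k$ with $\ell_T(\sigma)=\ell$ is of order $k^{2\ell}/(2^\ell \ell!)$ for $\ell$ small, the contribution of each level is of order $(k^{3/2}n^{-1/2})^\ell/\ell!$, and summing the resulting series formally yields $\exp(C k^{3/2} n^{-1/2})$, which is exactly the conjectured upper bound.

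The obstacle to turning this heuristic into a proof valid for all $k$ is precisely what forces the hypothesis $k < C_1 n^{1/2}$ in \cref{th: k medium}. Both the estimate $k^{2\ell}/(2^\ell\ell!)$ for the class sizes and the exponential-in-$\ell_T$ character bounds are accurate only when $\ell$ is small compared to $k$; when $\sigma$ has support comparable to $k$ (hence $\ell_T(\sigma)$ of order $k$), the class count saturates and the crude character bounds no longer compensate for the combinatorial growth. For $k \ge C_1 n^{1/2}$ this tail dominates and one recovers only the weaker \cref{th: k big}. Overcoming it would require either character bounds for $\chi^{\la}(\sigma)/f^{\la}$ that are sharp uniformly over all of $S_k$ (capturing the true decay for permutations with large support and long cycles, beyond the plain $(C/\sqrt n)^{\ell_T}$ regime), or a way to exploit the sign cancellation between terms, which the absolute-value bound discards.

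The matching lower bound is the genuinely new ingredient, since beyond the range $k=o(n^{1/3})$ of \cref{cor: k<1/3} the paper establishes upper bounds only, and I expect it to be the main obstacle. The leading behaviour is reassuring: isolating the identity term $1$ and the transposition term $\binom{k}{2}\,(\chi^{\la}(\tau)/f^{\la})(\chi^{\mu}(\tau)/f^{\mu})$, which has magnitude of order $k^{3/2}n^{-1/2}$ and may be negative, already gives $A_{\la/\mu} = 1 + \O(k^{3/2}n^{-1/2})$ in the small-$k$ range, consistent with $e^{-Ck^{3/2}n^{-1/2}}$. The difficulty is to show that the full signed character sum never drops below $e^{-Ck^{3/2}n^{-1/2}}$, uniformly up to $k$ of order $n$; lower-bounding an alternating sum is exactly what the character method is ill-suited for.

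For this reason I would attack the lower bound through a positivity-based route instead, using the Naruse hook-length formula (as exploited in \cite{PakMP,MPTassy}), which writes $f^{\la/\mu}$ as a sum of nonnegative terms over excited diagrams. Keeping only the ground-state excited diagram yields
\[
f^{\la/\mu} \ge |\la/\mu|!\, \prod_{(i,j)\in \la\setminus\mu} \frac{1}{h_{\la}(i,j)},
\]
and the plan would be to compare this single product with $f^{\la}f^{\mu}/k!$ and show, by a hook-length analysis on balanced diagrams, that at most a factor $e^{-Ck^{3/2}n^{-1/2}}$ is lost. The hard part, and the reason the statement remains conjectural, is that such a crude truncation need not preserve the correct constant: controlling both directions with the same rate $k^{3/2}n^{-1/2}$ for all $k$ would require simultaneously a sharp excited-diagram lower bound and sharp uniform character upper bounds, neither of which is currently available.
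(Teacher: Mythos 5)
The statement you were asked to prove is Conjecture~\ref{conj}: it is \emph{open}, and the paper itself contains no proof of it. What the paper offers instead is the partial upper bounds (\cref{th: k medium} for $k < C_1 n^{1/2}$ and the weaker \cref{th: k big} beyond), an argument in \cref{sec:limits} that no combination of currently known character bounds can improve \cref{th: k big}, and numerical evidence in \cref{sec:numerics}. Your proposal --- which you candidly frame as a strategy rather than a proof --- establishes neither the upper nor the lower bound of the conjecture, so there is no sense in which it closes the gap; but there is also no paper proof against which it could be found wanting.

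That said, your diagnosis of the obstacles matches the authors' own almost point for point. Your upper-bound heuristic (the expansion of \cref{lem:stanley}, bounds of the form $(C/\sqrt n)^{\ell_T(\sigma)}$ and $(C/\sqrt k)^{\ell_T(\sigma)}$ from \cref{th:valpiotr}, class counts of order $k^{2\ell}/(2^{\ell}\ell!)$, summation to $\exp(Ck^{3/2}n^{-1/2})$) is precisely the proof of \cref{th: k medium}, and your explanation of why it fails for $k \gtrsim n^{1/2}$ --- the long-length tail dominates and absolute-value bounds discard all cancellation --- is exactly what \cref{sec:limits} makes rigorous: even taking, summand by summand, the minimum of all known character bounds, the resulting estimate is at least $e^{k\left(\log \frac{k^2}{n}+\O(1)\right)}$, so no improvement can come from this method. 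Your positivity route for the lower bound via the Naruse hook-length formula is likewise the route the authors point to, namely the techniques of \cite{PakMP,MPTassy}, which they believe can handle $k=\Theta(n)$ but not the intermediate regime $n^{1/2}\ll k\ll n$. The one concrete gap worth naming: your single-excited-diagram truncation $f^{\la/\mu} \ge |\la/\mu|!\,\prod_{(i,j)\in\la\setminus\mu} h_{\la}(i,j)^{-1}$ is a valid inequality, but you give no argument that the discarded excited diagrams cost only a factor $e^{-Ck^{3/2}n^{-1/2}}$ for balanced shapes, uniformly in $k$; no such argument is known, and this is precisely the point at which the statement remains a conjecture rather than a theorem.
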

Note that this conjecture, unlike our previous two theorems, also includes a lower bound.
The conjecture is supported by numerical evidence obtained as above:
we computed the asymptotic expansion  of $\log A_{\la/\mu}$,
in various cases with product formulas.
These computations are also presented in \cref{sec:numerics}.

We believe that the techniques developed by Morales, Pak, Panova and Tassy \cite{PakMP,MPTassy}
can be used to prove the conjecture when $k$ is of order $n$. 
We do not know however how to attack it in the regime $n^{1/2} \ll k \ll n$,
or how to prove the lower bound for $k \ll n^{1/2}$.
The approach of this paper is not suited for lower bounds,
and new ideas will be needed to improve the upper bound
in the regime $n^{1/2} \ll k \ll n$, see discussion below.

\medskip

In our last set of results, we relax the condition that the Young diagrams need to be balanced.
Let us consider more generally diagrams $\lambda \vdash n$ and $\mu \vdash k$ whose number of rows and columns are bounded by some functions $\alpha(n)$ and $\beta(k)$, respectively.
Note that $\alpha(n)$ (resp. $\beta(k)$) are always between $n^{1/2}$ and $n$ (resp. $k^{1/2}$ and $k$).
 \cref{th: k small,th: k medium}, respectively, are special cases of the following theorems.

\begin{theorem}
\label{th: general1}
Let  $\lambda \vdash n$ (resp. $\mu \vdash k$) with number of rows and columns at most $\alpha(n)$ (resp. $\beta(k)$), such
$k \beta(k) = o\left(n/\alpha(n) \right)$. Then for any natural integer $r$ (not depending on $k$ and $n$), we have as $n$ tends to infinity,
$$A_{\lambda/ \mu} = \sum_{ \substack{\sigma \in S_k,\\ \ell_T( \sigma) \leq r}} \frac{\chi^{\lambda}(\sigma)}{f^{\lambda}} \frac{\chi^{\mu}(\sigma)}{f^{\mu}} + \O\left( \left( k\beta(k)n^{-1} \alpha(n) \right)^{r+1} \right).$$
\end{theorem}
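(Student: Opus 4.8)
The plan is to reduce everything to the exact character identity
\[
A_{\la/\mu} = \sum_{\si \in S_k} \frac{\chi^\la(\si)}{f^\la}\,\frac{\chi^\mu(\si)}{f^\mu},
\]
which also underlies the balanced case \cref{th: k small}. To obtain it I would start from the branching rule: restricting $\chi^\la$ to the Young subgroup $S_k \times S_{n-k}$ and evaluating at $\si \times \mathrm{id}_{n-k}$ gives, for every $\si\in S_k\subset S_n$, the expansion $\chi^\la(\si) = \sum_{\nu\vdash k} f^{\la/\nu}\,\chi^\nu(\si)$. Reading this as an expansion of the class function $\si\mapsto\chi^\la(\si)$ on $S_k$ in the orthonormal basis of irreducible characters, the coefficient of $\chi^\mu$ is $f^{\la/\mu} = \tfrac1{k!}\sum_{\si\in S_k}\chi^\la(\si)\chi^\mu(\si)$, and dividing by $f^\la f^\mu/k!$ yields the identity above. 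This step is purely algebraic and uses no balancedness, so the entire content of \cref{th: general1} lies in estimating the tail of this exact sum.

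Next I would split the sum according to absolute length, keeping $\{\ell_T(\si)\le r\}$ as the announced main term and bounding the remainder $\sum_{\ell_T(\si)>r}$. The first input is the known normalized-character bound (the ``known upper bounds on characters'' of the abstract) in the form $|\chi^\la(\si)/f^\la|\le (C\,\alpha(n)/n)^{\ell_T(\si)}$, valid once $\alpha(n)/n$ is small; I would sanity-check it against the $r=0$ and $r=1$ cases via the transposition formula $\chi^\la(\tau)/f^\la = \tfrac{2}{n(n-1)}(b(\la')-b(\la))$, since $|b(\la')-b(\la)|=\O(\alpha(n)\,n)$. Applying the same bound to $\mu$ with $\beta(k)/k$, each term with $\ell_T(\si)=\ell$ is $\O\big((\alpha(n)/n)^\ell(\beta(k)/k)^\ell\big)$.

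The second input is the count of permutations of fixed absolute length: the number of $\si\in S_k$ with $\ell_T(\si)=\ell$ is at most $k^{2\ell}/(2^\ell\ell!)$, being a signless Stirling number of the first kind, hence bounded by the number of products of $\ell$ transpositions. Multiplying this count by the two character bounds collapses the factors $k^{2\ell}$ and $k^{-\ell}$ into a single geometric term,
\[
\sum_{\ell_T(\si)=\ell}\left|\frac{\chi^\la(\si)}{f^\la}\frac{\chi^\mu(\si)}{f^\mu}\right| \le \frac{1}{\ell!}\Big(D\,\frac{k\,\beta(k)\,\alpha(n)}{n}\Big)^{\ell}.
\]
Summing over $\ell>r$ and writing $x:=k\beta(k)\alpha(n)/n$, the hypothesis $k\beta(k)=o(n/\alpha(n))$ means precisely $x=o(1)$, so the series is dominated by its first term: it is $\O(x^{r+1})\,e^{Dx}=\O\big((k\beta(k)n^{-1}\alpha(n))^{r+1}\big)$, exactly the claimed error. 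The factorial guarantees uniform convergence up to $\ell=k$, so no care about the length of the tail is needed.

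The genuinely substantial ingredient is the aspect-ratio character bound $|\chi^\la(\si)/f^\la|\le(C\alpha(n)/n)^{\ell_T(\si)}$; everything else is the exact identity plus elementary counting. The delicate point will be to establish and apply this bound with the correct dependence on $\alpha(n)$ (rather than only on $\sqrt n$ as in the balanced regime) and to confirm that the per-length constants $C^\ell$ are absorbed into the single constant $D$, so that $k\beta(k)=o(n/\alpha(n))$ is exactly what forces $x\to 0$. I expect this character estimate to be the crux, with the rest of the argument following the balanced proof of \cref{th: k small} essentially verbatim, $\sqrt n$ and $\sqrt k$ being replaced by $\alpha(n)$ and $\beta(k)$.
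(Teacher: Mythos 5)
Your overall architecture (Stanley's identity via the branching rule and orthogonality, splitting the sum by absolute length, the counting lemma for permutations of fixed length, and bounding the tail of an exponential series) matches the paper's proof. However, there is a genuine gap in your key analytic input: the uniform bound $|\chi^{\mu}(\si)/f^{\mu}| \leq (C\beta(k)/k)^{\ell_T(\si)}$ for \emph{all} $\si \in S_k$ is not the known character bound and cannot be invoked. The bound actually available (\cref{th:valpiotr}, due to F\'eray--\'Sniady \cite{ValentinPiotr}) is
$$\left|\frac{\chi^{\nu}(\si)}{f^{\nu}}\right| \leq \left[a\max\left(\frac{r(\nu)}{m},\frac{c(\nu)}{m},\frac{\ell_T(\si)}{m}\right)\right]^{\ell_T(\si)},$$
and the third argument of the maximum cannot be dropped. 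For $\la$ this is harmless, since every $\si \in S_k$ has $\ell_T(\si) < k \leq \alpha(n)$ (this last inequality itself deserves a line: $k \leq (k\beta(k))^{2/3} = o\big((n/\alpha(n))^{2/3}\big) \leq \alpha(n)$, using $\beta(k)\geq k^{1/2}$ and $\alpha(n)\geq n^{1/2}$). But for $\mu$ it breaks your uniform treatment: permutations in $S_k$ have absolute length up to $k-1$, while $\beta(k)$ may be as small as $\sqrt{k}$, so on the whole range $\beta(k) < \ell_T(\si) \leq k-1$ the maximum is $\ell_T(\si)/k$, not $\beta(k)/k$, and your per-length estimate $\frac{1}{\ell!}\big(D\,k\beta(k)\alpha(n)/n\big)^{\ell}$ is unjustified there. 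This is precisely why the claim that ``the rest follows the balanced proof essentially verbatim'' does not hold: the delicate regime is the long permutations, not the dependence on $\alpha(n)$.

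The repair is the paper's second splitting. For $\ell_T(\si) = i > \beta(k)$, apply \eqref{eq:4} to $\chi^{\mu}$ (giving $(ai/k)^{i}$), keep \eqref{eq:1} only for $\chi^{\la}$, and use $i! \geq i^{i}e^{-i}$ so that the count times the two bounds is at most $\big(a^2 e\, k\,n^{-1}\alpha(n)\big)^{i}$. This tail is now a \emph{geometric} series rather than an exponential tail; it converges because the hypothesis together with $\beta(k)\geq 1$ forces $k\alpha(n)/n \to 0$, and its leading term $\big(a^2 e\, k\, n^{-1}\alpha(n)\big)^{\beta(k)+1}$ is $\O\big((k\beta(k)n^{-1}\alpha(n))^{r+1}\big)$ because one may assume $r \leq \beta(k)$ (replacing $\beta(k)$ by $\max(\beta(k),r)$ if necessary). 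Incidentally, your counting bound $k^{2\ell}/(2^{\ell}\ell!)$ also needs justification (dividing the number of ordered transposition products by $\ell!$ requires each permutation to admit at least $\ell!$ minimal factorizations); it is safer, and sufficient, to quote Lemma 14 of \cite{ValentinPiotr}, which gives $k^{2\ell}/\ell!$.
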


\begin{theorem}
\label{th: k general2}
There exist constants $C'_1$ and $C'_2$ such that the following holds.
Let  $\lambda \vdash n$ (resp. $\mu \vdash k$) with number of rows and columns at most $\alpha(n)$ (resp. $\beta(k)$), such that $k < C'_1 n/\alpha(n)$. 
Then we have
$$A_{\lambda/ \mu} \leq e^{C'_2 k\beta(k)n^{-1} \alpha(n)}.$$
\end{theorem}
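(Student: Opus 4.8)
The plan is to start from the exact character expansion of $A_{\lambda/\mu}$ that already underlies \cref{th: general1}, and then, instead of truncating the sum, to control its \emph{entire} tail by summing a character bound over all of $S_k$. Writing $\widehat\chi^\nu(\sigma):=\chi^\nu(\sigma)/f^\nu$ for the normalized character, the starting point is the identity
\[
A_{\lambda/\mu}=\sum_{\sigma\in S_k}\widehat\chi^\lambda(\sigma)\,\widehat\chi^\mu(\sigma),
\]
which I would re-derive in three lines: since $f^{\lambda/\mu}=\langle s_\lambda,\,s_\mu\,p_1^{\,n-k}\rangle$, expanding $s_\lambda$ and $s_\mu$ in the power-sum basis and using $\langle s_\lambda,p_\pi p_1^{\,n-k}\rangle=\chi^\lambda_{\pi\cup 1^{n-k}}$ gives $f^{\lambda/\mu}=\tfrac1{k!}\sum_{\sigma\in S_k}\chi^\lambda(\sigma)\chi^\mu(\sigma)$, and dividing by $f^\lambda f^\mu/k!$ yields the claim. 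Taking absolute values immediately reduces the theorem to an upper bound for $\sum_{\sigma\in S_k}|\widehat\chi^\lambda(\sigma)|\,|\widehat\chi^\mu(\sigma)|$.

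The second ingredient is the character estimate of Féray–Śniady, which for a diagram $\nu$ whose rows and columns have length at most $d$ reads $|\widehat\chi^\nu(\sigma)|\le\bigl(Cd/|\nu|\bigr)^{\ell_T(\sigma)}$. Applying it to $\lambda$ (with $d\le\alpha(n)$) and to $\mu$ (with $d\le\beta(k)$) and multiplying gives the combined per-term bound
\[
\bigl|\widehat\chi^\lambda(\sigma)\bigr|\,\bigl|\widehat\chi^\mu(\sigma)\bigr|
\le\Bigl(\tfrac{C^2\,\alpha(n)\,\beta(k)}{n\,k}\Bigr)^{\ell_T(\sigma)}=:x^{\,\ell_T(\sigma)} .
\]
This is exactly the estimate already responsible for the error term $O\bigl((k\beta(k)n^{-1}\alpha(n))^{r+1}\bigr)$ in \cref{th: general1}; the difference here is that I need it \emph{uniformly} over all of $S_k$, including permutations of large absolute length, rather than for one fixed value of $\ell_T(\sigma)$.

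The final step is to sum $x^{\ell_T(\sigma)}$ over $S_k$. Grouping permutations by their number of cycles and using that $\ell_T(\sigma)$ equals $k$ minus the number of cycles of $\sigma$, the generating function for unsigned Stirling numbers of the first kind gives the closed form
\[
\sum_{\sigma\in S_k}x^{\ell_T(\sigma)}=\prod_{j=1}^{k-1}(1+jx)\le\exp\!\Bigl(x\sum_{j=1}^{k-1}j\Bigr)\le\exp\!\bigl(\tfrac{x k^2}{2}\bigr),
\]
using $1+y\le e^y$. Substituting $x=C^2\alpha(n)\beta(k)/(nk)$ turns the exponent into $\tfrac{C^2}{2}\,k\beta(k)n^{-1}\alpha(n)$, which is precisely the desired bound with $C'_2=C^2/2$.

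The main obstacle, and the place where the hypothesis $k<C'_1 n/\alpha(n)$ is spent, is guaranteeing the uniform validity of the character estimate across the whole sum. The clean multiplicative form of the Féray–Śniady bound holds only while the absolute length stays below a fixed fraction of $n/\alpha(n)$ for the $\lambda$-factor; since every $\sigma\in S_k$ has $\ell_T(\sigma)\le k-1<C'_1 n/\alpha(n)$, choosing $C'_1$ equal to that fraction keeps the $\lambda$-estimate valid for all terms at once. The genuinely delicate point is the $\mu$-factor, where $\sigma$ ranges over all of $S_k$ and can have absolute length comparable to $k$: one must check that $|\widehat\chi^\mu(\sigma)|\le(C\beta(k)/k)^{\ell_T(\sigma)}$ survives for these long permutations (for instance via the centralizer bound $|\chi^\mu(\sigma)|\le|C_{S_k}(\sigma)|^{1/2}$ together with the size of $f^\mu$), since otherwise the tail of long permutations would dominate and the $\beta(k)$ saving in the exponent would be lost.
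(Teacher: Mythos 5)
Your overall architecture (character expansion of $A_{\lambda/\mu}$, per-term character bounds, then summation over $S_k$) is the paper's, and two of your three steps are sound: your symmetric-function derivation of $A_{\lambda/\mu}=\sum_{\sigma\in S_k}\frac{\chi^\lambda(\sigma)}{f^\lambda}\frac{\chi^\mu(\sigma)}{f^\mu}$ is a correct variant of \cref{lem:stanley}, and the identity $\sum_{\sigma\in S_k}x^{\ell_T(\sigma)}=\prod_{j=1}^{k-1}(1+jx)\le e^{xk^2/2}$ is a clean substitute for the paper's combination of \cref{lem:nbperm} with an exponential series. The genuine gap is the middle step, which you flag yourself but do not close: the uniform per-term bound $\left|\chi^\mu(\sigma)/f^\mu\right|\le\left(C\beta(k)/k\right)^{\ell_T(\sigma)}$ for \emph{all} $\sigma\in S_k$ is not what \cref{th:valpiotr} provides. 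That theorem has $\ell_T(\sigma)/m$ inside the maximum, so once $\ell_T(\sigma)>\beta(k)$ it only yields \eqref{eq:4}, i.e.\ $\left(a\ell_T(\sigma)/k\right)^{\ell_T(\sigma)}$, which for $\ell_T(\sigma)$ of order $k$ is of order $a^k$ --- exponentially larger than the $\left(a\beta(k)/k\right)^k$ your computation needs. Since your product-formula step requires the bound $x^{\ell_T(\sigma)}$ for every single term, the argument collapses precisely on this tail. Your proposed repair via $|\chi^\mu(\sigma)|\le|C_{S_k}(\sigma)|^{1/2}$ also fails in the critical regime: a permutation with $\ell_T(\sigma)=i$ just above $\beta(k)$ can consist of one $(i+1)$-cycle and $k-i-1$ fixed points, with centralizer of size $(i+1)(k-i-1)!$; dividing the square root of this by $f^\mu=e^{\frac12 k\log k+\O(k)}$ leaves a bound of the form $k^{-i/2}e^{\O(k)}$, and the parasitic $e^{\O(k)}$ is fatal because under the hypothesis $k<C_1'n/\alpha(n)$ the target bound $e^{C_2'k\beta(k)\alpha(n)n^{-1}}\le e^{C_1'C_2'\beta(k)}$ is $e^{o(k)}$ whenever $\beta(k)=o(k)$.

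The correct fix is the paper's: split the sum at $\ell_T(\sigma)=\beta(k)$. On the range $\ell_T(\sigma)\le\beta(k)$ your computation is valid and gives (as in \eqref{eq:S'1}) the bound $e^{a^2k\beta(k)\alpha(n)n^{-1}}$, which is the main term. On the tail $\ell_T(\sigma)>\beta(k)$ one must accept the weaker bound \eqref{eq:4} for the $\mu$-factor; combined with \cref{lem:nbperm} and $i!\ge i^ie^{-i}$, the tail \eqref{eq:S2} becomes a geometric series with ratio $a^2ek\alpha(n)n^{-1}$, and the hypothesis $k<C_1'n/\alpha(n)$ with $C_1'=(a^2e)^{-1}$ is exactly what makes this ratio less than $1$, yielding \eqref{eq:boundS2}. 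Note that this, and not (as you state) keeping $\ell_T(\sigma)\le k$ inside the range of validity of the $\lambda$-estimate, is where the hypothesis on $k$ is really spent; the leading factor $\left(a^2ek\alpha(n)n^{-1}\right)^{\beta(k)+1}$ of the geometric series is then small enough to be absorbed into the main term.
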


\begin{remark}
All our constants $C_1, C_2, C'_1, C'_2$ in the above theorems have precise expressions depending on the constant $a$ appearing in the bound on characters from \cref{th:valpiotr}. These expressions can be found in the proofs of the corresponding theorems. 
Unfortunately, the constant $a$ in \cref{th:valpiotr} was not made explicit in the original paper \cite{ValentinPiotr},
so that we cannot make the constants $C_1, C_2, C'_1, C'_2$ completely explicit.
\end{remark}


\subsection*{Method}
We finish this introduction by a short discussion on the method used to prove our results.
As Stanley, we start with an expression of $f^{\la/\mu}$ (or equivalently $A_{\la/\mu}$)
as a sum involving irreducible characters of the symmetric group (see \cref{lem:stanley} below).
We then control the sum thanks to an upper bound on these characters
due to the second author and \'Sniady \cite{ValentinPiotr}.

In Section \ref{sec:limits}, we argue that using other bounds for characters known to date 
would not allow us to improve the bound in Theorem~\ref{th: k big}.
Namely, in the sum expressing $A_{\la/\mu}$ in terms of characters,
we replace each (normalized) character $\chi^\la(\si)/f^{\la}$ (or $\chi^\mu(\si)/f^{\mu}$)
by the best bound from the literature (at least the best bound we are aware),
giving a natural upper bound $B_{\la/\mu}$ for $A_{\la/\mu}$;
note that the best bound is chosen independently for each summand, i.e.
we are using different bounds for different summands, always the best possible.
We prove that the resulting bound $B_{\la/\mu}$ is of order at least 
$e^{k\left( \log \frac{k^2}{n} + \O(1) \right)}$, i.e. not better that what we
have in Theorem~\ref{th: k big}.

In comparison, the method of Morales, Pak and Panova \cite{PakMP} is completely different,
being based on a (non-multiplicative) hook-length formula for skew diagrams.

\section{Proofs of the asymptotic estimates}
Let us consider two Young diagrams $\lambda \vdash n$ and $\mu \vdash k$, and denote by $r(\lambda)$ (resp. $c(\lambda)$) the number of rows (resp. colums) of $\lambda$ (idem for $\mu$). We assume that
\[
r(\lambda), c(\lambda) \leq \alpha(n),\quad
r(\mu), c(\mu) \leq \beta(k),
\]
for some functions $\alpha$ and $\beta$. Note that, by the definition of Young diagrams, we necessarily have
\[
n^{1/2} \leq \alpha(n) \leq n ,\quad
k^{1/2} \leq \beta(k)\leq k.
\]
 
\subsection{Preliminaries}
We start with a lemma expressing $f^{\la/\mu}$ (or equivalently $A_{\la,\mu}$)
as a sum of irreducible character values of the symmetric group.
This formula already appears in a slightly different form in Stanley
\cite[Theorem 3.1]{Stanley},
but here we give a more direct proof.
\begin{lemma}[Stanley]
\label{lem:stanley}
Let $\mu$ be a partition of $k$, and let $n \geq k$. Then for all partitions $\lambda$ of $n$, we have
$$A_{\lambda/\mu} =  \sum_{\sigma \in S_k} \frac{\chi^{\lambda}(\sigma)}{f^{\lambda}} \frac{\chi^{\mu}(\sigma)}{f^{\mu}}.$$
\end{lemma}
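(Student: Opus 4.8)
The plan is to prove the equivalent identity
\[
f^{\lambda/\mu}=\frac{1}{k!}\sum_{\sigma\in S_k}\chi^\lambda(\sigma)\,\chi^\mu(\sigma),
\]
from which the lemma follows immediately: multiplying both sides by $k!/(f^\lambda f^\mu)$ and recalling the definition $A_{\lambda/\mu}=k!\,f^{\lambda/\mu}/(f^\lambda f^\mu)$ turns the right-hand side into $\sum_{\sigma\in S_k}\tfrac{\chi^\lambda(\sigma)}{f^\lambda}\tfrac{\chi^\mu(\sigma)}{f^\mu}$, exactly the claimed formula. So everything reduces to the displayed identity above, where throughout $\sigma\in S_k$ is viewed inside $S_n$ via the fixed embedding $S_k\subset S_n$ described in the introduction.

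The key observation is that the right-hand side is a character inner product. Under the embedding $S_k\subset S_n$, the map $\sigma\mapsto\chi^\lambda(\sigma)$ is precisely the character of the restricted representation $\mathrm{Res}^{S_n}_{S_k}\chi^\lambda$. Since the irreducible characters of the symmetric group are integer-valued, and in particular real, we have $\overline{\chi^\mu(\sigma)}=\chi^\mu(\sigma)$, and hence
\[
\frac{1}{k!}\sum_{\sigma\in S_k}\chi^\lambda(\sigma)\,\chi^\mu(\sigma)
=\big\langle \mathrm{Res}^{S_n}_{S_k}\chi^\lambda,\ \chi^\mu\big\rangle_{S_k},
\]
which by orthonormality of the $\chi^\mu$ is exactly the multiplicity of the irreducible $S_k$-module indexed by $\mu$ in $\mathrm{Res}^{S_n}_{S_k}\chi^\lambda$. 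Thus it remains to identify this multiplicity with $f^{\lambda/\mu}$.

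To compute it, I would iterate the classical branching rule $\mathrm{Res}^{S_m}_{S_{m-1}}\chi^\nu=\sum_{\nu^-}\chi^{\nu^-}$, the sum being over diagrams $\nu^-$ obtained from $\nu$ by deleting one box. Applying this $n-k$ times to pass from $S_n$ down to $S_k$, the multiplicity of $\chi^\mu$ in $\mathrm{Res}^{S_n}_{S_k}\chi^\lambda$ equals the number of saturated chains $\mu=\lambda^{(0)}\subset\lambda^{(1)}\subset\cdots\subset\lambda^{(n-k)}=\lambda$ in Young's lattice, each inclusion adding a single box. Such chains are in obvious bijection with standard Young tableaux of skew shape $\lambda/\mu$ (the box added at the $j$-th step receives entry $j$), so the multiplicity is $f^{\lambda/\mu}$, as desired. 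None of these steps is an essential obstacle; the only point requiring care is the bookkeeping in the iterated branching and the verification that the embedding $S_k\subset S_n$ indeed makes $\sigma\mapsto\chi^\lambda(\sigma)$ the restricted character, rather than anything twisted. (Alternatively, one can reach the same conclusion through symmetric functions: writing $s_{\lambda/\mu}=\sum_\nu c^\lambda_{\mu\nu}s_\nu$ with Littlewood--Richardson coefficients, the restriction formula $\chi^\lambda(\sigma\times\mathrm{id})=\sum_{\rho,\nu}c^\lambda_{\rho\nu}\chi^\rho(\sigma)f^\nu$ combined with orthogonality collapses the sum to $\sum_\nu c^\lambda_{\mu\nu}f^\nu=f^{\lambda/\mu}$; I would present the branching-rule version as the more direct route.)
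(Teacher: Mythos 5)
Your proof is correct and follows essentially the same route as the paper's: both expand $\mathrm{Res}^{S_n}_{S_k}\chi^\lambda$ via the iterated branching rule into $\sum_{\mu\vdash k} f^{\lambda/\mu}\chi^\mu$ and then extract the coefficient $f^{\lambda/\mu}$ by orthogonality of irreducible characters, followed by the same normalization. Your write-up merely makes explicit the bijection between saturated chains in Young's lattice and skew standard tableaux, which the paper leaves implicit in its citation of the branching rule.
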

\begin{proof}
  Iterating the branching rule for representations of the symmetric group (see, e.g., \cite[Section 2.8]{Sagan}),
  we get that, if $\sigma \in S_k$,
  \[\chi^{\lambda}(\sigma)=\sum_{\mu \vdash k} f^{\la/\mu}\, \chi^{\mu}(\sigma),\]
  where the sum runs over partitions of $k$.
  Since $(\sigma \mapsto \chi^{\mu}(\sigma))_{\mu \vdash k}$ forms an orthogonal basis of central functions on $S_k$
  \cite[Theorem 1.9.3]{Sagan}, the coefficient $f^{\la/\mu}$ of $\chi^{\mu}(\sigma)$ 
  in the expansion of $\chi^{\lambda}(\sigma)$ can be obtained by a scalar product computation:
  \[f^{\la/\mu}= \langle \chi^{\lambda}(\sigma),\chi^{\mu}(\sigma) \rangle
  =\frac{1}{k!} \sum_{\sigma \in S_k} \chi^{\lambda}(\sigma) \chi^{\mu}(\sigma).\]
  Dividing by $f^{\lambda} \, f^{\mu}$ and multiplying by $k!$ gives the result.
\end{proof}

Our results are also based on asymptotic bounds for characters 
due to the second author and \'Sniady \cite{ValentinPiotr}.

\begin{theorem}[F\'eray-\'Sniady]
\label{th:valpiotr}
There exists a constant $a>1$ such that for every partition $\nu \vdash m$ and every permutation $\sigma \in S_m$,
$$   \left| \frac{\chi^{\nu}(\sigma)}{f^{\nu}} \right| \leq \left[a \max \left(\frac{r(\nu)}{m},\frac{c(\nu)}{m}, \frac{\ell_T(\sigma )}{m} \right) \right]^{\ell_T(\sigma)}.$$
\end{theorem}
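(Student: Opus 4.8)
The plan is to prove this bound through the Stanley--F\'eray combinatorial formula for normalized characters and then estimate the resulting signed sum; the cancellations hidden in that sum are what make the statement nontrivial.

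\textbf{Reduction to normalized characters.} First I would discard the fixed points of $\sigma$ and write $\mu$ for the cycle type of its non-trivial part, so that $\ell_T(\sigma)=|\mu|-\ell(\mu)$, where $\ell(\mu)$ denotes the number of cycles. Introducing the normalized character
$$\Sigma_\mu(\nu):=(m)_{|\mu|}\,\frac{\chi^{\nu}(\sigma)}{f^{\nu}},\qquad (m)_{|\mu|}:=m(m-1)\cdots(m-|\mu|+1),$$
and using that $(m)_{|\mu|}$ is of order $m^{|\mu|}=m^{\ell_T(\sigma)+\ell(\mu)}$ (when $|\mu|$ is not too close to $m$), the claimed inequality becomes equivalent to a bound of the shape
$$|\Sigma_\mu(\nu)|\le \big(a'\max(r(\nu),c(\nu),\ell_T(\sigma))\big)^{\ell_T(\sigma)}\,m^{\ell(\mu)},$$
for a suitable constant $a'$. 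The factor $m^{\ell(\mu)}$ records that $\ell(\mu)$ of the degrees of freedom may cost as much as $m$, whereas the remaining $\ell_T(\sigma)$ must cost only $\max(r,c,\ell_T)$; producing exactly this split is the heart of the matter.

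\textbf{The Stanley--F\'eray formula.} Next I would encode $\nu$ by its multirectangular coordinates $\mathbf{p}=(p_1,\dots,p_d)$, $\mathbf{q}=(q_1,\dots,q_d)$ and fix a permutation $\sigma$ of cycle type $\mu$ on $\{1,\dots,|\mu|\}$. The Stanley--F\'eray formula then writes $\Sigma_\mu(\nu)$ as a signed sum over factorizations,
$$\Sigma_\mu(\nu)=(-1)^{|\mu|}\sum_{\sigma_1\sigma_2=\sigma}(-1)^{\ell_T(\sigma_1)}\,N_\nu(\sigma_1,\sigma_2),$$
where $N_\nu(\sigma_1,\sigma_2)$ counts the colourings assigning a row-index to each cycle of $\sigma_1$ and a column-index to each cycle of $\sigma_2$ in a way compatible with the shape $\nu$. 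Bounding each row-index by $r(\nu)$ and each column-index by $c(\nu)$ gives $|N_\nu(\sigma_1,\sigma_2)|\le \max(r(\nu),c(\nu))^{\,C(\sigma_1)+C(\sigma_2)}$, where $C(\cdot)$ is the number of cycles. A more careful count of the colourings, which tracks the lengths of the cycles of $\sigma_1$, is what brings the quantity $\ell_T(\sigma)$ itself into the maximum when $\sigma$ is long.

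\textbf{The main obstacle: cancellations.} The difficulty is that this sum is signed and that the naive degree $C(\sigma_1)+C(\sigma_2)$ overshoots the target exponent. The Euler-characteristic relation $C(\sigma_1)+C(\sigma_2)+C(\sigma)=|\mu|+2K-2g$, with $K$ the number of connected components of the factorization and $g\ge 0$ its genus, together with $K\le \ell(\mu)$, gives $C(\sigma_1)+C(\sigma_2)\le |\mu|+\ell(\mu)$, i.e. up to $2\ell(\mu)$ more coordinate factors than the allowed exponent $\ell_T(\sigma)=|\mu|-\ell(\mu)$. Equality holds only for planar ($g=0$) factorizations, and these are both large and numerous; their number is nevertheless only of Catalan type, at most $(\mathrm{const})^{|\mu|}$, which, since every part of $\mu$ is at least $2$ and hence $\ell(\mu)\le\ell_T(\sigma)$ and $|\mu|\le 2\ell_T(\sigma)$, is absorbed into a constant raised to the power $\ell_T(\sigma)$ and yields the universal $a$. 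In the balanced regime $\max(r,c)\approx m^{1/2}$ the surplus $\max(r,c)^{2\ell(\mu)}$ is comparable to the budgeted $m^{\ell(\mu)}$, so a term-by-term estimate essentially suffices; but for very thin diagrams this fails, and one genuinely needs the alternating signs. The hard part, accordingly, is to organise the factorizations by genus and cancel the top-degree planar monomials uniformly in $\sigma$ and $\nu$, trading the excess $2\ell(\mu)$ factors of $\max(r,c)$ for $m^{\ell(\mu)}$ and recovering the clean exponent $\ell_T(\sigma)$.
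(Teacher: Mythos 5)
First, an important contextual point: the paper you are reading does not prove \cref{th:valpiotr} at all. It is imported as a black box from \cite{ValentinPiotr} (F\'eray--\'Sniady), and everything in the paper builds on top of it; so the only meaningful comparison is with the proof in that reference. Your sketch does identify the framework of that original proof correctly: reduce to the normalized character $\Sigma_\mu(\nu)$, expand it by the Stanley--F\'eray formula as a signed sum over factorizations $\sigma_1\sigma_2=\sigma$ weighted by coloring numbers $N_\nu(\sigma_1,\sigma_2)$, and keep track of $C(\sigma_1)+C(\sigma_2)$ through the Euler-characteristic identity $C(\sigma_1)+C(\sigma_2)+C(\sigma)=|\mu|+2K-2g$ with $K\le \ell(\mu)$. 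Up to that point your accounting (including $\ell(\mu)\le \ell_T(\sigma)$ and $|\mu|\le 2\ell_T(\sigma)$) is sound.

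However, the proposal stops exactly where the proof has to start, and the mechanism you nominate to close the gap is the wrong one. You observe that the naive estimate $N_\nu(\sigma_1,\sigma_2)\le \max(r(\nu),c(\nu))^{C(\sigma_1)+C(\sigma_2)}$ overshoots the target by $\max(r,c)^{2\ell(\mu)}$ against the available budget $m^{\ell(\mu)}$, and you then assert that for thin diagrams ``one genuinely needs the alternating signs'' and that the hard part is to cancel the planar top-degree terms uniformly in $\sigma$ and $\nu$ --- but no such cancellation argument is given, only announced. In fact the actual proof in \cite{ValentinPiotr} uses no cancellation at all: it bounds every term of the signed sum in absolute value, replacing the naive coloring estimate by a rooted one. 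For each orbit (connected component) of $\langle \sigma_1,\sigma_2\rangle$ one chooses the \emph{box} of $\nu$ occupied by a root element, which costs at most $m\le r(\nu)c(\nu)$ rather than $\max(r,c)^2$, and each further cycle of the component then costs at most $\max(r(\nu),c(\nu))$ by a spanning-tree argument. This yields $N_\nu(\sigma_1,\sigma_2)\le m^{K}\max(r,c)^{C(\sigma_1)+C(\sigma_2)-2K}$, which via the genus identity gives precisely the per-term bound $m^{\ell(\mu)}\max(r,c)^{\ell_T(\sigma)-2g}$ --- the factor $m^{\ell(\mu)}$ you were missing appears term by term, with no recourse to signs; what then remains is to control the number of factorizations of each genus, a nontrivial enumeration step that your sketch also leaves untouched. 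So there is a genuine gap at the central step, and your proposed route through sign cancellations is both unsubstantiated and unnecessary; indeed, \cref{sec:limits} of the very paper you are annotating stresses that understanding signs and cancellations in such character sums is regarded as a hard open problem, which is a strong hint that a proof plan hinging on them is not the intended one.
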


With a given bound on $r(\nu)$ and $c(\nu)$, this result specializes as follows.
\begin{corollary}
\label{lem:boundchar}
Let $\nu \vdash m$ be a partition with
\begin{align*}
r(\nu), c(\nu) &\leq \gamma(m),
\end{align*}
with $m^{1/2} \leq \gamma(m) \leq m$,
and let $\sigma$ be a permutation in $S_m$.
Then the following holds.
\begin{itemize}
\item When $\ell_T(\sigma) \leq \gamma(m)$,
\begin{equation}
\label{eq:1}
\left| \frac{\chi^{\nu}(\sigma)}{f^{\nu}} \right| \leq \left(\frac{a\gamma(m)}{m} \right)^{\ell_T(\sigma)}.
\end{equation}
\item When $\ell_T(\sigma) > \gamma(m)$,
\begin{equation}
\label{eq:4}
\left| \frac{\chi^{\nu}(\sigma)}{f^{\nu}} \right| \leq \left(\frac{a \ell_T(\sigma)}{m} \right)^{\ell_T(\sigma)}.
\end{equation}
\end{itemize}
\end{corollary}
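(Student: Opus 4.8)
The plan is to derive both inequalities directly from the bound of \cref{th:valpiotr} by controlling the maximum that appears there. Fix $\nu \vdash m$ and $\sigma \in S_m$. Since by hypothesis $r(\nu) \leq \gamma(m)$ and $c(\nu) \leq \gamma(m)$, the first two arguments of the maximum already satisfy $r(\nu)/m \leq \gamma(m)/m$ and $c(\nu)/m \leq \gamma(m)/m$. It therefore remains only to compare these with the third argument $\ell_T(\sigma)/m$, which naturally splits the analysis into the two announced cases.

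First I would treat the case $\ell_T(\sigma) \leq \gamma(m)$. Here $\ell_T(\sigma)/m \leq \gamma(m)/m$ as well, so all three arguments of the maximum are bounded by $\gamma(m)/m$ and hence
\[
\max\!\left(\frac{r(\nu)}{m},\frac{c(\nu)}{m},\frac{\ell_T(\sigma)}{m}\right) \leq \frac{\gamma(m)}{m}.
\]
Plugging this into \cref{th:valpiotr} and using that the map $t \mapsto t^{\ell_T(\sigma)}$ is nondecreasing on $[0,\infty)$ (the exponent $\ell_T(\sigma)$ being a nonnegative integer), one obtains \eqref{eq:1}.

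For the complementary case $\ell_T(\sigma) > \gamma(m)$, the third argument dominates the first two, so the maximum equals exactly $\ell_T(\sigma)/m$; substituting this value into \cref{th:valpiotr} gives \eqref{eq:4} immediately.

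The argument is essentially bookkeeping: the only point deserving (minimal) attention is that the base $a\,\max(\cdots)$ of the exponential may be either smaller or larger than $1$, so that one must invoke the monotonicity of the power function rather than any simplification. I expect no genuine obstacle here, the statement being a straightforward specialization of the general character bound to the regime $r(\nu),c(\nu)\leq\gamma(m)$.
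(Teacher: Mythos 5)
Your proof is correct and is exactly the argument the paper intends: the paper states this corollary as an immediate specialization of \cref{th:valpiotr}, and your case analysis on whether $\ell_T(\sigma)$ or $\gamma(m)$ dominates the maximum, together with monotonicity of $t \mapsto t^{\ell_T(\sigma)}$ on $[0,\infty)$, is precisely that specialization spelled out.
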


\subsection{Proof of Theorem \ref{th: general1}}
Let us now prove Theorem~\ref{th: general1}. We assume $k \beta(k) = o\left(n/\alpha(n) \right)$.
By Lemma \ref{lem:stanley}, we have
\[
A_{\lambda/\mu} =  \sum_{\sigma \in S_k} \frac{\chi^{\lambda}(\sigma)}{f^{\lambda}} \frac{\chi^{\mu}(\sigma)}{f^{\mu}}
= \sum_{i=0}^k \sum_{ \substack{\sigma \in S_k,\\ \ell_T(\sigma) = i}} \frac{\chi^{\lambda}(\sigma)}{f^{\lambda}} \frac{\chi^{\mu}(\sigma)}{f^{\mu}}
\]

Let $r$ be a fixed positive integer. We may assume $r \leq \beta(k)$; otherwise, simply replace $\beta(k)$ by $\max (\beta(k),r)$.  We split this sum into three parts:
\begin{equation}
\label{eq:decoupage k small}
A_{\lambda/\mu} = \sum_{i=0}^r \sum_{ \substack{\sigma \in S_k,\\ \ell_T(\sigma) = i}} \frac{\chi^{\lambda}(\sigma)}{f^{\lambda}} \frac{\chi^{\mu}(\sigma)}{f^{\mu}} + S_1 + S_2,
\end{equation}
where
\begin{align}
S_1&:= \sum_{i=r+1}^{\beta(k)} \sum_{ \substack{\sigma \in S_k,\\\ell_T(\sigma)= i}} \frac{\chi^{\lambda}(\sigma)}{f^{\lambda}} \frac{\chi^{\mu}(\sigma)}{f^{\mu}},\\
S_2&:= \sum_{i=\beta(k)+1}^k \sum_{ \substack{\sigma \in S_k,\\ \ell_T(\sigma) = i}} \frac{\chi^{\lambda}(\sigma)}{f^{\lambda}} \frac{\chi^{\mu}(\sigma)}{f^{\mu}}.
\label{eq:S2}
\end{align}

We now wish to bound $S_1$ and $S_2$. To do so, we use a lemma from \cite{ValentinPiotr}.
\begin{lemma}[Lemma 14 from \cite{ValentinPiotr}]
\label{lem:nbperm}
For all $k,i \in \N$, we have
$$\#\left\{\sigma \in S_k : \ell_T(\sigma)=i\right\} \leq \frac{k^{2i}}{i!}.$$
\end{lemma}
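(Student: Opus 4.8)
The plan is to identify this counting problem with a classical one. The first step is to recall the well-known closed form for the absolute length: for $\sigma\in S_k$ one has $\ell_T(\sigma)=k-c(\sigma)$, where $c(\sigma)$ denotes the number of cycles of $\sigma$ (fixed points included). This holds because right-multiplication by a transposition changes the number of cycles by exactly $\pm 1$, so at least $k-c(\sigma)$ transpositions are needed to reach $\sigma$ from the identity, and this many also suffice. Hence $\{\sigma\in S_k:\ell_T(\sigma)=i\}$ is precisely the set of permutations of $\{1,\dots,k\}$ having $k-i$ cycles, and its cardinality is the unsigned Stirling number of the first kind $\left[{k\atop k-i}\right]$.

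Next I would translate this Stirling number into an elementary symmetric polynomial. Starting from the generating identity $\prod_{j=0}^{k-1}(x+j)=\sum_{j\ge 0}\left[{k\atop j}\right]x^{j}$ and expanding the left-hand product, the coefficient of $x^{k-i}$ is $e_i(0,1,\dots,k-1)=e_i(1,2,\dots,k-1)$, where $e_i$ denotes the $i$-th elementary symmetric polynomial. Therefore $\#\{\sigma\in S_k:\ell_T(\sigma)=i\}=e_i(1,2,\dots,k-1)$.

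The final step is a standard bound on elementary symmetric polynomials of nonnegative reals, namely $e_i(a_1,\dots,a_m)\le \tfrac{1}{i!}(a_1+\dots+a_m)^{i}$. This follows by expanding $(a_1+\dots+a_m)^i$ as a sum over all ordered $i$-tuples of indices: the tuples with pairwise distinct entries contribute exactly $i!\,e_i(a_1,\dots,a_m)$, while the remaining terms are nonnegative. Applying this with $a_j=j$ and using $1+2+\dots+(k-1)=\tfrac{k(k-1)}{2}\le \tfrac{k^2}{2}$ yields $e_i(1,\dots,k-1)\le \tfrac{1}{i!}\big(\tfrac{k(k-1)}{2}\big)^{i}\le \tfrac{k^{2i}}{i!}$, which is the desired inequality (in fact with a spare factor $2^{i}$).

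I do not expect a genuine obstacle here, since the statement is essentially classical; the only points needing care are the two standard ingredients invoked above (the cycle formula for $\ell_T$ and the Stirling/elementary-symmetric identity). A more combinatorial alternative would count minimal factorizations of each $\sigma$ into $i$ transpositions and divide the total number $\binom{k}{2}^i$ of transposition words of length $i$ by the per-permutation multiplicity; this works because every $\sigma$ with $\ell_T(\sigma)=i$ admits at least $i!$ such minimal factorizations, but making that lower bound precise requires Dénes' formula $c^{\,c-2}$ for the factorizations of a single $c$-cycle together with an interleaving multinomial, which is heavier than the symmetric-function computation and is the reason I would favour the approach above.
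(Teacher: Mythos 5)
Your proof is correct, so let me first note the point of comparison: the paper does not prove this lemma at all---it imports it verbatim as Lemma 14 of \cite{ValentinPiotr} and uses it as a black box---so your argument is a self-contained substitute rather than a variant of an internal proof. Every step checks out. The identity $\ell_T(\sigma)=k-c(\sigma)$ (with $c(\sigma)$ the number of cycles, fixed points included) is the classical one, justified exactly as you say, so the set in question is the set of permutations of $S_k$ with $k-i$ cycles, counted by the unsigned Stirling number of the first kind. The translation to symmetric polynomials is also right: the number of permutations of $S_k$ with $k-i$ cycles is the coefficient of $x^{k-i}$ in $x(x+1)\cdots(x+k-1)$, which is $e_i(0,1,\dots,k-1)=e_i(1,2,\dots,k-1)$. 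Finally, the multinomial bound $i!\,e_i(a_1,\dots,a_m)\le (a_1+\cdots+a_m)^i$ for nonnegative reals is valid (the tuples with repeated indices contribute nonnegatively), giving
\[
\#\left\{\sigma \in S_k : \ell_T(\sigma)=i\right\}=e_i(1,\dots,k-1)\le \frac{1}{i!}\left(\frac{k(k-1)}{2}\right)^{i}\le \frac{k^{2i}}{i!},
\]
indeed with a spare factor of $2^i$, and the edge cases $i=0$ and $i>k-1$ are trivially consistent. Your closing remark is also sound judgment: the alternative route via counting minimal transposition factorizations (D\'enes-type formulas) would work but is heavier; the Stirling-number/elementary-symmetric route is the cleanest way to get the $1/i!$, and nothing in it hides a gap.
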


Using Lemma \ref{lem:nbperm} and Equation \eqref{eq:1} for $\nu=\lambda$ and $\nu=\mu$, we obtain
$$|S_1| \leq \sum_{i=r+1}^{\beta(k)} \frac{k^{2i}}{i!} \left(\frac{a\alpha(n)}{n} \right)^{i} \left(\frac{a\beta(k)}{k} \right)^{i}.$$
We can now bound $S_1$ by the tail of an exponential series:
$$|S_1| \leq \sum_{i=r+1}^{+ \infty} \frac{ \left(a^2k\beta(k)n^{-1}\alpha(n)\right)^i}{i!}.$$
As $k \beta(k) = o\left(n/\alpha(n) \right)$, we conclude that
\begin{equation}
\label{eq:boundS1}
|S_1| =  \O\left(  \left(k\beta(k)n^{-1}\alpha(n)\right)^{r+1} \right).
\end{equation}

Let us now turn to $S_2$.
Since $\ell_T(\sigma) =i > \beta(k)$ in the summation index of $S_2$,
we must now apply \eqref{eq:4} to $\chi^\mu(\sigma)$.
On the other hand, since $\alpha(n)$ (resp. $\beta(k)$) is at least $n^{1/2}$ (resp. $k^{1/2}$),
we have, for $n$ big enough,
\[\ell_T(\sigma) \le k \le (k \beta(k))^{2/3} = o\left(\left(n/\alpha(n) \right)\right)^{2/3} \leq \alpha(n).\]
As a consequence we can still apply \eqref{eq:1} to $\chi^\la(\sigma)$.
Combining these bounds with Lemma \ref{lem:nbperm}, we have:
$$ |S_2| \leq \sum_{i=\beta(k)+1}^k  \frac{k^{2i}}{i!} \left(\frac{a\alpha(n)}{n} \right)^{i} \left(\frac{a i}{k} \right)^{i}.$$
Using 
$i! \geq \tfrac{i^i}{e^i},$
we obtain
$$ |S_2| \leq \sum_{i=\beta(k)+1}^k  \left(a^2e k n^{-1}\alpha(n) \right)^{i}.$$
This geometric series converges for $k < \left(a^2e \right)^{-1} n/\alpha(n)$, and thus $|S_2|$ is bounded by
\begin{equation}
\label{eq:boundS2}
|S_2| \leq \left(a^2e k n^{-1}\alpha(n)\right)^{\beta(k)+1} \frac{1}{1-\left(a^2e k n^{-1}\alpha(n) \right)}.
\end{equation}
When $k \beta(k) = o\left(n/\alpha(n) \right)$ and $\beta(k)\geq r$, the upper bound in \eqref{eq:boundS2} is negligible compared to \eqref{eq:boundS1} since the exponent $\beta(k)+1$ is at least $r+1$.
Thus combining Equations \eqref{eq:decoupage k small}, \eqref{eq:boundS1} and \eqref{eq:boundS2}, we obtain
$$A_{\lambda/\mu} = \sum_{ \substack{\sigma \in S_k,\\ \ell_T(\sigma) \leq r}} \frac{\chi^{\lambda}(\sigma)}{f^{\lambda}} \frac{\chi^{\mu}(\sigma)}{f^{\mu}} +  \O\left(  \left(k\beta(k)n^{-1}\alpha(n)\right)^{r+1} \right).$$
Theorem \ref{th: general1} is proved. \qed

\begin{proof}[Proof of Theorem \ref{th: k small}]
  Theorem \ref{th: k small} is obtained by specializing Theorem \ref{th: general1} to the case \hbox{$\alpha(n)=L \sqrt{n}$} and $\beta(k)=L \sqrt{k}$.
\end{proof}


\subsection{Proof of Theorem \ref{th: k general2}}
\label{sec:proof k medium}
Let us assume that $k < C'_1 n/\alpha(n)$, with $C'_1 :=  \left(a^2e \right)^{-1}$.
We proceed similarly to Theorem \ref{th: general1}. Let us write
$$A_{\lambda/\mu} = S'_1 +S_2,$$
where $S_2$ is defined in \eqref{eq:S2} and
\begin{equation}
  S'_1:= \sum_{i=0}^{\beta(k)} \sum_{ \substack{\sigma \in S_k,\\ \ell_T(\sigma) = i}} \frac{\chi^{\lambda}(\sigma)}{f^{\lambda}} \frac{\chi^{\mu}(\sigma)}{f^{\mu}}.
  \label{eq:Sp1}
\end{equation}
By the same argument as in the proof of Theorem \ref{th: k small}, we can bound this by an exponential sum and obtain
\begin{equation}
\label{eq:S'1}
|S'_1| \leq e^{a^2k\beta(k)n^{-1}\alpha(n)}.
\end{equation}
Thanks to our choice for $C'_1$, the sum $S_2$ is still a truncated convergent geometric series and \eqref{eq:boundS2} still holds. Combining this with \eqref{eq:S'1} completes the proof. \qed


\begin{proof}[Proof of Theorem \ref{th: k medium}]
Theorem \ref{th: k medium} is obtained by specializing Theorem \ref{th: k general2} to the case $\alpha(n)=L \sqrt{n}$ and $\beta(k)=L \sqrt{k}$. In that case the constant $C'_1$ becomes $C_1 =\left(a^2Le \right)^{-1}$. For the constant $C_2$, any value $C_2>a^2 L^2$ works for $n$ and $k$ large enough.
However the authors of \cite{ValentinPiotr} do not give estimates on the value of $a$ in \cref{th:valpiotr},
so this is not useful in practice to compute $C_1$ or $C_2$.
\end{proof}

\subsection{Proof of Theorem \ref{th: k big}}
From now on we assume that $\lambda$ and $\mu$ are balanced, i.e.
\[
r(\lambda), c(\lambda) \leq L \sqrt{n},\quad
r(\mu), c(\mu) \leq L \sqrt{k},
\]
for some positive constant $L$. 

We prove the bound for large $k$, i.e. $k \ge (a^2Le)^{-1} \sqrt{n}$. 
Again, we split $A_{\lambda/\mu}$ into several parts:
$$A_{\lambda/\mu} = S'_1 +S'_2+S'_3,$$
where $S'_1$ was defined in \eqref{eq:Sp1} for $\beta(k) = L \sqrt{k}$,
$$S'_2 := \sum_{i=L \sqrt{k}+1}^{L \sqrt{n}} \sum_{ \substack{\sigma \in S_k,\\ \ell_T(\sigma) = i}} \frac{\chi^{\lambda}(\sigma)}{f^{\lambda}} \frac{\chi^{\mu}(\sigma)}{f^{\mu}},$$
and
$$S'_3 := \sum_{i=L\sqrt{n}+1}^{k} \sum_{ \substack{\sigma \in S_k,\\ \ell_T(\sigma) = i}} \frac{\chi^{\lambda}(\sigma)}{f^{\lambda}} \frac{\chi^{\mu}(\sigma)}{f^{\mu}}.$$
Note that it might happen that $k \le L\sqrt{n}$. 
In this case, $S'_3=0$ and all terms for $i>k$ in $S'_2$ are $0$ as well,
but this will not affect the bounds below.
\smallskip

Consider first $S'_1$. Since it is a sum over permutations $\sigma$ with $\ell_T(\sigma) \le L\sqrt{k} \le L\sqrt{n}$,
we can still apply \eqref{eq:1} and
 Equation \eqref{eq:S'1} still holds.
 \smallskip

 For $S'_2$, we should apply \eqref{eq:1} to $\chi^\lambda(\sigma)$ and \eqref{eq:4} to $\chi^\mu(\sigma)$.
 Combining with Lemma \ref{lem:nbperm} and using again the bound $i!\geq \tfrac{i^i}{e^i}$, we have:
\begin{align*}
|S'_2| &\leq \sum_{i=L \sqrt{k}+1}^{L \sqrt{n}}  \frac{k^{2i}}{i!} \left(\frac{aL}{\sqrt{n}} \right)^{i} \left(\frac{a i}{k} \right)^{i}\\
&\leq \left(a^2Le k n^{-\frac{1}{2}} \right)^{L \sqrt{k}+1} \times \frac{\left(a^2Le k n^{-\frac{1}{2}} \right)^{{L \sqrt{n}}-L\sqrt{k}}-1}{\left(a^2Le k n^{-\frac{1}{2}} \right)-1}.\\
&\leq C_3 \left(a^2Le k n^{-\frac{1}{2}} \right)^{L \sqrt{n}},
\end{align*}
for some constant $C_3$. Note that this bound is different from \eqref{eq:boundS2},
since we now have a {\em divergent} geometric series.
Thus
\begin{equation}
\label{eq:boundS'2}
|S'_2| \leq e^{\sqrt{n} \left(\log (kn^{-\frac{1}{2}}) + \O(1) \right)}.
\end{equation}

Let us finally turn to $S'_3$. By Lemma \ref{lem:nbperm} and Equation \eqref{eq:4} 
applied to $\chi^\lambda(\sigma)$ and $\chi^\mu(\sigma)$, we have
$$|S'_3| \leq  \sum_{i=L \sqrt{n}+1}^k \frac{k^{2i}}{i!} \left(\frac{ai}{n} \right)^{i} \left(\frac{a i}{k} \right)^{i}.$$
As before we bound the first factor $i^i$ by $i!e^i$, and using $i \leq k$, we bound $\left(\frac{a i}{k} \right)^{i}$ by $a^i$. This gives
\begin{align*}
  |S'_3| &\leq  \sum_{i=L \sqrt{n}+1}^k \left(\frac{a^2k^{2}e}{n}\right)^i.\\
  & \le \left(a^2k^{2}e n^{-1}\right)^{L \sqrt{n}+1} \frac{\left(a^2k^{2}e n^{-1}\right)^{k-L\sqrt{n}}-1}{\left(a^2k^{2}e n^{-1}\right)-1}\\
  & \le C_4 \left(a^2k^{2}e n^{-1}\right)^k=e^{k \left(\log \frac{k^2}{n} + \O(1) \right)},
\end{align*}
for some constant $C_4$.

From \eqref{eq:S'1} and \eqref{eq:boundS'2}, we know that $S'_1$ and $S'_2$ are also bounded by $e^{k (\log \tfrac{k^2}{n} + \O(1))}$
(for \hbox{$k=\Theta(n^{1/2})$}, the upper bound given in \eqref{eq:boundS'2} is of this order; otherwise it is smaller).
This completes the proof of Theorem \ref{th: k big}. \qed

\section{Limitations of the method}
\label{sec:limits}
As mentioned in the introduction, we do not believe the bound in \cref{th: k big} to be sharp.
Nevertheless we argue in this section that, with the known bounds on characters and the method used in this paper,
we cannot improve \cref{th: k big}.
Such an improvement would require either new bounds on characters,
or taking signs and cancellations appearing in \cref{lem:stanley} into account,
or a completely different method.
Improving upper bounds on characters or having a good understanding of their signs
are in general believed to be hard problems,
so we think that an improvement is more likely to come from a totally different method.
\medskip

As far as we are aware of, the best bounds on characters available in the literature are the following.
\begin{itemize}
  \item In \cite{Roichman1996}, Roichman proved that there exist constants $b>0$ and $q \in (0,1)$
    such that for any partition $\nu$ and permutation $\sigma$ of the same size $m$, we have
    \begin{equation}
      \left|\frac{\chi^\nu(\sigma)}{f^\nu} \right| \le 
      \left[ \max \left( q,\tfrac{r(\nu)}{m},\tfrac{c(\nu)}{m} \right) \right]^{b\, \ell_T(\sigma)}.
      \label{eq:Roichmann}
    \end{equation}
    (In Roichman's paper, the exponent is in fact $b \, |\textrm{supp}(\sigma)|$,
    where $|\textrm{supp}(\sigma)|$ is the size of the support of $\sigma$,
    {\em i.e.} the number of its non-fixed points.
    This modification is however irrelevant
    since the value of $b$ is not known and, for all $\sigma$, we have $\ell_T(\sigma) \le \textrm{supp}(\sigma) \le 2\ell_T(\sigma)$.)
  \item In \cite{muller07fuchsian}, M\"uller and Schlage-Putcha 
    proved the following bound: for $m$ sufficiently large,
    for any partition $\nu$ of $m$ and permutation $\sigma$ of $m$ with $f$ fixed points, one has
    \begin{equation}
      \left|\frac{\chi^\nu(\sigma)}{f^\nu} \right| \le (f^\nu)^{-\delta(\si,\nu)},\text{ where }
    \delta(\si,\nu)=\left( (1-1/(\log m))^{-1}\, \frac{12 \log(m)}{\log(m/f)} +18 \right)^{-1}
    \label{eq:Bound_MSP}
 \end{equation}
  \item Shortly after this, Larsen and Shalev \cite{LarsenShalev2008}
    found a bound of the same kind with 
    a different exponent:
    for all $\eps >0$, there exists $N$ such that, for all integers $m>N$,
    partitions $\nu$ and permutations $\sigma$ both of size $m$, we have
    \begin{equation}
      \left|\frac{\chi^\nu(\sigma)}{f^\nu} \right| \le (f^\nu)^{B(\sigma)-1+\eps},
      \label{eq:LarsenShalev}
    \end{equation}
    where $B(\sigma)$ is some parameter between $0$ and $1$ associated with $\sigma$. Its complete definition
    is technical and irrelevant here, but we note that $B(\sigma)$ is small (i.e. far from one)
    if and only if $\sigma$ has few (i.e. a sublinear number of) fixed points.
  \item Lastly, we restate for the reader's convenience the bound of the second author and \'Sniady \cite{ValentinPiotr}:
there exists a constant $a>1$ such that for every partition $\nu \vdash m$ and every permutation $\sigma \in S_m$,
\begin{equation}
   \left| \frac{\chi^{\nu}(\sigma)}{f^{\nu}} \right| \leq \left[a \max \left(\frac{r(\nu)}{m},\frac{c(\nu)}{m}, \frac{\ell_T(\sigma)}{m} \right) \right]^{\ell_T(\sigma)}.
   \label{eq:ValentinPiotr}
 \end{equation}
\end{itemize}
The last bound is the best one if the two following conditions hold simultaneously:
the diagrams have no rows or columns of linear size (which we always assume here)
and the length of the permutation is sublinear (implying that the number of fixed point is of order $n$). 
In contrast, the third bound can be better for permutations $\sigma$ in $S_m$
with a sublinear number of fixed points.
Therefore, a natural idea is the following:
it might be possible to improve \cref{th: k big} by combining all these bounds,
i.e. take the minimal one for each permutation $\sigma$ (and diagram $\nu$).
We will see that it is not the case, and that improving \cref{th: k big} requires a new approach.

Call $U_{\text{R}}(\sigma,\nu)$ (resp. $U_{\text{MSP}}(\sigma,\nu)$, $U_{\text{LS}}(\sigma,\nu)$, $U_{\text{F\' S}}(\sigma,\nu)$)
the upper bound in \eqref{eq:Roichmann} (resp. \eqref{eq:Bound_MSP}, \eqref{eq:LarsenShalev} and \eqref{eq:ValentinPiotr}) and set
\[U_{\text{all}}(\sigma,\nu) = \min \big( U_{\text{R}}(\sigma,\nu), U_{\text{MSP}}(\sigma,\nu), U_{\text{LS}}(\sigma,\nu), U_{\text{F\' S}}(\sigma,\nu) \big).\]
From \cref{lem:stanley}, we know that
\[A_{\lambda/\mu} \le B_{\lambda/\mu} \text{, with }B_{\lambda/\mu}=\sum_{\sigma \in S_k} U_{\text{all}}(\sigma,\lambda) U_{\text{all}}(\sigma,\mu).\]
The main result of this section is a lower bound for $B_{\lambda/\mu}$,
which matches the upper bound in \cref{th: k big}.
This justifies our claim from the introduction, that using
other existing bounds on characters than the one from \cite{ValentinPiotr}
would not result in an improvement of \cref{th: k big}.
\begin{proposition}
Let  $\lambda \vdash n$ and $\mu \vdash k$ balanced, and assume $k \ge 2 L n^{\frac{1}{2}}.$ Then we have
$$B_{\lambda/ \mu} \geq e^{k\left( \log \frac{k^2}{n} + \O(1) \right)}.$$
\end{proposition}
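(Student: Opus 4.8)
The plan is to bound $B_{\lambda/\mu}$ from below by retaining only part of the defining sum. Since each summand $U_{\text{all}}(\si,\la)\,U_{\text{all}}(\si,\mu)$ is nonnegative, for any family $\mathcal{F}\subseteq S_k$ we have
\[
B_{\lambda/\mu}\ \ge\ \sum_{\si\in\mathcal{F}}U_{\text{all}}(\si,\la)\,U_{\text{all}}(\si,\mu).
\]
To decide where to look, I would revisit the proof of \cref{th: k big}: there the upper bound $e^{k(\log\frac{k^2}{n}+\O(1))}$ is produced by the term $S'_3$, that is, by permutations of nearly maximal absolute length $\ell_T(\si)\approx k$, estimated through the F\'eray--\'Sniady bound on both sides. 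So the matching lower bound should be concentrated on the same region. The natural family is $\mathcal{F}_s$, the permutations whose support has a prescribed size $s$ and carries a single $s$-cycle (the remaining $k-s$ points being fixed): this maximises $\ell_T=s-1$ for a given support and has the exact cardinality $\binom{k}{s}(s-1)!$. I would then write $B_{\lambda/\mu}\ge\sum_s |\mathcal{F}_s|\,U_{\text{all}}(\si_s,\la)\,U_{\text{all}}(\si_s,\mu)$ and optimise over $s$.

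The two factors behave oppositely and must be treated separately. On the $\la$-side, $\si_s$ seen in $S_n$ has at least $n-k$ fixed points, so Larsen--Shalev is trivial while Roichman and M\"uller--Schlage-Putcha only cost $e^{-\Theta(\ell_T)}$; the F\'eray--\'Sniady bound is the smallest of the four, giving $U_{\text{all}}(\si_s,\la)=(a\ell_T/n)^{\ell_T}$ for $\ell_T>L\sqrt n$, and this is the factor responsible for the crucial $n$-dependence. On the $\mu$-side the roles are reversed: $\si_s$ has few fixed points in $S_k$, so the F\'eray--\'Sniady bound becomes trivial ($\ge 1$) and the minimum $U_{\text{all}}(\si_s,\mu)$ is instead governed by Roichman's, M\"uller--Schlage-Putcha's or Larsen--Shalev's estimate. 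Here I would evaluate each of these explicit bounds at $(\si_s,\mu)$ and keep the smallest. Taking logarithms and using $\log\!\big(\binom{k}{s}(s-1)!\big)=s\log k+\O(k)$, the contribution of $\mathcal{F}_s$ becomes a function of $s$ whose leading part, were the $\mu$-factor of F\'eray--\'Sniady size, would be $2s\log s-s\log n$; this is maximised by pushing $s$ towards $k$, where it reaches the target value $k\log(k^2/n)$.

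The main obstacle is exactly this $\mu$-side control, which is also what fixes the optimal $s$. The permutations we are forced to use in order to generate the right $n$-dependence on the $\la$-side are long cycles, hence have few fixed points in $S_k$, and this is precisely the regime in which the M\"uller--Schlage-Putcha and Larsen--Shalev bounds are nontrivial and drag $U_{\text{all}}(\si_s,\mu)$ below $1$. The delicate point is to choose the number $k-s$ of fixed points large enough that these two bounds cost only a factor $e^{\O(k)}$, yet small enough that $s$ stays close to $k$ so that the count $\binom{k}{s}(s-1)!$ together with the $\la$-factor still delivers the full $(k^2/n)^k$. I expect the reconciliation of these competing effects to be the technical heart of the argument, and I expect the hypothesis $k\ge 2Ln^{1/2}$ to be what guarantees that the optimal $s$ lies in the range $\ell_T>L\sqrt n$ in which the F\'eray--\'Sniady estimate takes the useful form $(a\ell_T/n)^{\ell_T}$.
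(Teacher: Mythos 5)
Your plan has the same architecture as the paper's proof: lower-bound $B_{\la/\mu}$ by the contribution of a family of permutations of near-maximal absolute length, get the decisive $n$-dependence from the F\'eray--\'Sniady factor $\left(a\ell_T(\si)/n\right)^{\ell_T(\si)}$ on the $\la$-side, and argue that on the $\mu$-side all competing bounds cost only $e^{\O(k)}$. But the step you defer --- how many fixed points to keep, and the verification that with that choice the $\mu$-side bounds really cost only $e^{\O(k)}$ --- is the entire content of the proof, and your proposal does not contain it; you explicitly call it ``the technical heart of the argument'' and leave it open. The paper's resolution is concrete: fix a constant $\eta>0$ (e.g.\ $\eta=\tfrac18$) and restrict to permutations with exactly $\lfloor \eta k\rfloor$ fixed points and $\ell_T(\si)\ge(1-\eta)^2k$. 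That the number of fixed points is a \emph{positive fraction} of $k$ is what makes every bound harmless on the $\mu$-side: (i) in Larsen--Shalev \eqref{eq:LarsenShalev}, $B(\si)$ is then uniformly close to $1$, so the exponent $B(\si)-1+\eps$ is eventually positive and the bound is trivial; (ii) in M\"uller--Schlage-Putcha \eqref{eq:Bound_MSP}, $f=\lfloor\eta k\rfloor$ gives $\log(k/f)=\O(1)$, hence $\delta(\si,\mu)=\O(1/\log k)$ and $U_{\text{MSP}}(\si,\mu)\ge (k!)^{-\O(1/\log k)}=e^{-\O(k)}$; (iii) Roichman \eqref{eq:Roichmann} costs at least $q^{b\,\ell_T(\si)}\ge q^{bk}$, and the F\'eray--\'Sniady factor is at least $\left(\min\left(q^b,a(1-\eta)^2\right)\right)^k$. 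Without (i)--(iii), the inequality $U_{\text{all}}(\si,\mu)\ge e^{-\O(k)}$, on which everything hinges, is unproven.

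Moreover, your guiding intuition --- ``push $s$ towards $k$, where it reaches the target value $k\log(k^2/n)$'' --- points in a direction that fails. If $k-s=o(k)$, then $\si_s$ has sublinearly many fixed points in $S_k$, so $B(\si_s)$ stays bounded away from $1$ and Larsen--Shalev forces $U_{\text{all}}(\si_s,\mu)\le (f^\mu)^{B(\si_s)-1+\eps}=e^{-\Theta(k\log k)}$ for balanced $\mu$, which wipes out the gain from the count and from the $\la$-factor. The family must therefore retain $\Theta(k)$ fixed points, and the price is a count of $k^{(1-\eta)k}e^{\O(k)}$ instead of $k^{k}e^{\O(k)}$: the method actually delivers $\exp\left((2-\eta)k\log k-k\log n+\O(k)\right)$ for a fixed $\eta>0$, not the limiting value $\exp\left(2k\log k-k\log n+\O(k)\right)$ your optimization targets (the paper's final display silently absorbs this $\eta k\log k$ shortfall). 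Two smaller remarks: your single $s$-cycle family is otherwise fine, and in fact slightly simpler than the paper's, which takes all permutations with $\lfloor\eta k\rfloor$ fixed points and then needs Feller's asymptotic normality of the number of cycles to show most of them have length at least $(1-\eta)^2k$, whereas an $s$-cycle has length exactly $s-1$; and your $\la$-side analysis (F\'eray--\'Sniady is the binding bound, the others cost $e^{-\O(k)}$ because $\si_s$ has at least $n-k$ fixed points in $S_n$) agrees with the paper's.
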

\begin{proof}
  The idea of the proof consists in identifying in the sum defining $B_{\lambda/\mu}$
  a large subfamily of permutations $\sigma$ for which $U_{\text{all}}(\sigma,\lambda) U_{\text{all}}(\sigma,\mu)$
  is large and reasonably simple, so that the sum on this smaller set is already big enough.

  Fix a small number $\eta>0$, {\em e.g.}, $\eta=\frac18$. We denote by $F(\sigma)$ the number of fixed points of $\sigma$.
  We will consider, for each $k \ge 1$, the set
  \[ \Omega_k^\eta=\big\{\sigma \in S_k:\ F(\sigma) = \lfloor \eta\, k \rfloor 
  \text{ and } \ell_T(\sigma) \ge (1-\eta)^2k \big\}. \]
  In the following, all constants are independent from $k$ and $n$ (so a fortiori of $\lambda$ and $\mu$),
  but might depend on $\eta$.
  Our first claim is that there exists a constant $C_5$  such that
  \begin{equation}
    \#\Omega_k^\eta \ge (C_5)^k\, k^{(1-\eta)k}.
    \label{eq:sizeO}
  \end{equation}
  First consider the superset $\widetilde{\Omega}_k^\eta \supset \Omega_k^\eta$ 
  obtained by removing the condition on the length of $\ell_T(\sigma)$.
  Permutations in $\widetilde{\Omega}_k^\eta$ are uniquely obtained by choosing the $\lfloor \eta\, k \rfloor$ fixed points
  and choosing a derangement (i.e. a fixed-point free permutation) 
  of the remaining $\lceil (1-\eta)k \rceil$ points.
  Therefore
  \[\#\widetilde{\Omega}_k^\eta = \binom{k}{\eta k}\, D_{\lceil k(1-\eta)\rceil},\]
  where $D_K$ is the number of derangements of $K$ elements.
  It is well known that $D_K\sim \tfrac{K!}{e}$, so that a simple computation using Stirling's approximation gives
  \[\#\widetilde{\Omega}_k^\eta \ge \gamma_1^k\, k^{(1-\eta)k},\]
  for some constant $\gamma_1$ and $k$ large enough.

  It remains to see that $\Omega_k^\eta$ covers at least a  proportion $\gamma_2^k$ of $\widetilde{\Omega}_k^\eta$ (for some constant $\gamma_2$).
  In fact we will see that this proportion tends to $1$.
  Equivalently, we will prove that the proportion of derangements of $K$ elements
  with length at least $(1-\eta)K$ tends to 1.
  
  It is well-known since Feller \cite[p. 815]{Feller} 
  that the number of cycles in a uniform random permutation of size $K$
  is asymptotically normal with mean and variance $\log(K)$.
  In particular the proportion of permutations of size $K$ with less than $\eta K$ cycles
 - or equivalently with length at least $(1-\eta)K$ - tends to $1$.
  On the other hand the proportion of derangements tends to $1/e$.
  So the proportion of derangements with length at least $(1-\eta)K$
  among derangements should tend to $1$ as well.
  This completes the proof of \eqref{eq:sizeO}.

  In the following we assume that $\sigma$ is in $\Omega_k^\eta$.
  We now give a lower bound for the summand corresponding to such permutations
  in the definition of $B_{\lambda/\mu}$.
  It easy to check that for $k$ large enough, the parameter $B(\sigma)$
  appearing in the Larsen-Shalev bound will be arbitrary close to $1$ uniformly
  for $\sigma$ in $\Omega_k^\eta$ (since $\sigma$ has linearly many fixed points)
  so that the exponent $1-B(\sigma)+\eps$ in the Larsen-Shalev bound becomes eventually positive.
 The Larsen-Shalev bound does not give any information in this case (the RHS is bigger than $1$,
  while we know that the LHS is always smaller than 1).

  We now analyse M\"uller-Sclage Putcha bound.
  When $\sigma$ is in $\Omega_k^\eta$,
  some elementary analysis proves that $\delta(\si,\mu)=\O(\log(k)^{-1})$,
  while $\delta(\si,\la)=\O(kn^{-1}\log(n)^{-1})$
  (in the latter case, we see $\si$ as a permutation in $S_n$;
  it therefore has $n-k+\lfloor \eta k \rfloor$ fixed points).
  Using the trivial bound $f^\mu \le k!$ and $f^\la \le n!$,
  we get that both $U_{\text{MSP}}(\sigma,\mu)$ and $U_{\text{MSP}}(\sigma,\la)$
  are at least of order $e^{-\O(k)}$.
  Note that this is similar to Roichman's bound for such permutations.

  We can therefore forget about Larsen-Shalev and M\"uller-Schlage Putcha bounds
  when evaluating either $U_{\text{all}}(\sigma,\lambda)$ and $U_{\text{all}}(\sigma,\mu)$ for $\sigma$ is in $\Omega_k^\eta$.
  Besides, since the diagrams $\la$ and $\mu$ are balanced and
  since the length of $\sigma$ in $\Omega_k^\eta$ is always at least $(1-2\eta)k \ge L\sqrt{n}$
  (assuming $\eta <1/4$), the maximum in the other two bounds will never be reached by $\tfrac{r(\lambda)}{\sqrt{n}}$ or $\tfrac{c(\lambda)}{\sqrt{n}}$.
  We therefore have
  \begin{align*}
    U_{\text{all}}(\sigma,\lambda) &= \left[ \min\big(q^b,\tfrac{a\ell_T(\sigma)}{n}\big)\right]^{\ell_T(\sigma)} \ge 
    \left[ \min\big(q^b,\tfrac{a(1-\eta)^2 k}{n}\big)\right]^k \ge \big(C_6 \tfrac{k}{n}\big)^k,\\
    U_{\text{all}}(\sigma,\mu) &= \left[ \min\big(q^b,\tfrac{a\ell_T(\sigma)}{k}\big)\right]^{\ell_T(\sigma)} \ge 
    \left[ \min\big(q^b,a(1-\eta)^2\big)\right]^k =C_7^k, \\
    \end{align*}
    for some constants $C_6$ and $C_7$.
   Combining this with \eqref{eq:sizeO}, we get
   \[\sum_{\sigma \in \Omega_k^\eta} U_{\text{all}}(\sigma,\lambda) U_{\text{all}}(\sigma,\mu) \ge
    (C_5)^k\, k^{(1-\eta)k} \big(C_6 \tfrac{k}{n}\big)^k C_7^k.\]
    Call RHS the right-hand side of the previous display. We have
    \[\log(\textrm{RHS})=(2k -\eta)\log k - k \log n +k\log(C_5\, C_6\, C_7)=k \big(\log \tfrac{k^2}{n}+\O(1)\big).\]
    This completes the proof of the proposition.
\end{proof}

\section{Numerical evidence}
\label{sec:numerics}
In this section, we give numerical evidence to support Conjecture \ref{conj} and the fact that the bound of Theorem \ref{th: k medium} is sharp.

In \cite{KimOh}, Kim and Oh proved an exact product formula for the number of standard Young tableaux of shape $\la/\mu$ represented in Figure 1.
This formula was then independently rediscovered by Morales, Pak and Panova \cite{MPP3},
under the following form (which is different from Kim-Oh's original formulation).

\begin{figure}[ht]
  \centering
    \includegraphics[width=0.25\textwidth]{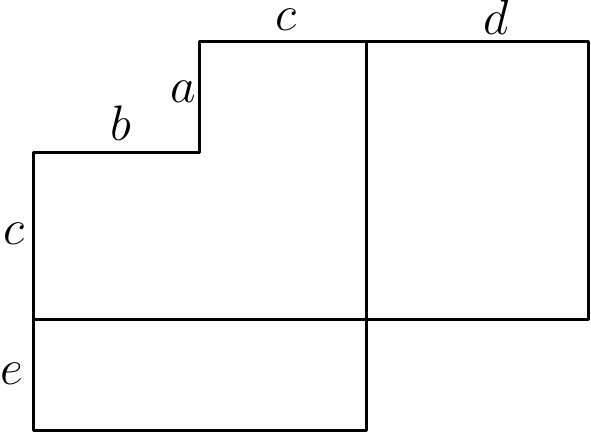}
      \caption{The skew shape $\la/\mu$}
\end{figure}

\begin{theorem}[Kim-Oh, Morales-Pak-Panova]
\label{th:KimOh}
For all $a,b,c,d,e \in \N$, the number $f^{\la/\mu}$ of standard Young tableaux of shape $\la/\mu$ given in Figure 1 is equal to
\begin{align*}
f^{\la/\mu} &= \big((a+c+e)(b+c+d)-ab-ed\big)!\\
& \times \frac{\Phi(a)\Phi(b)\Phi(c)\Phi(d)\Phi(e)\Phi(a+b+c)\Phi(c+d+e)\Phi(a+b+c+d+e)}{\Phi(a+b)\Phi(d+e)\Phi(a+c+d)\Phi(b+c+e)\Phi(a+b+2c+d+e)},
\end{align*}
where $\Phi$ is the superfactorial defined as
$$\Phi(n) := 1! \times 2! \times \cdots \times (n-1)! .$$
\end{theorem}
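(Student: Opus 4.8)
The plan is to compute $f^{\lambda/\mu}$ directly from the Aitken--Feit determinantal formula
\[
f^{\lambda/\mu} = |\lambda/\mu|!\,\det\!\left(\frac{1}{(\lambda_i-\mu_j-i+j)!}\right)_{1\le i,j\le \ell},
\]
where $\ell$ is the number of rows of $\lambda$ and one sets $1/m!=0$ for $m<0$. First I would read off from Figure~1 the explicit partitions $\lambda$ and $\mu$ as functions of $a,b,c,d,e$; the stated prefactor already identifies the number of cells as $(a+c+e)(b+c+d)-ab-ed$, which is the area of an $(a+c+e)\times(b+c+d)$ rectangle with an $a\times b$ corner (part of $\mu$) and an $e\times d$ corner removed. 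Matching this area serves as a consistency check on the reading of the shape before any computation.

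The key structural feature is that the shape has only a bounded number (independent of the sizes) of distinct row lengths, since the rows of $\lambda$ fall into a constant number of blocks of equal length, and likewise the parts of $\mu$. Consequently the entries $1/(\lambda_i-\mu_j-i+j)!$ depend, within each block of rows and columns, only on the relative index $j-i$ through a fixed shift. I would exploit this by performing row and column reductions --- taking successive differences and using $\tfrac{1}{m!}=\tfrac{1}{(m+1)!}\,(m+1)$ together with the binomial form of the entries --- to collapse the $\ell\times\ell$ determinant down to a determinant whose size equals the number of blocks, a constant of around five.

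Step three is to evaluate the reduced determinant. After the reduction the entries should take a Cauchy-type form $1/(x_i+y_j)!$; factoring out the appropriate factorials exposes a genuine Cauchy determinant $\det(1/(x_i+y_j))=\prod_{i<j}(x_j-x_i)(y_j-y_i)\big/\prod_{i,j}(x_i+y_j)$, whose evaluation is classical. The Vandermonde products $\prod_{i<j}(x_j-x_i)$ run over arithmetic progressions of integer arguments, so each is a product of consecutive factorials --- exactly the combination that assembles into a superfactorial $\Phi(n)=\prod_{j=1}^{n-1}j!$. Collecting the numerator and denominator contributions and matching the arguments $a,b,c,d,e,a+b+c,c+d+e,a+b+c+d+e$ against $a+b,d+e,a+c+d,b+c+e,a+b+2c+d+e$ then yields the stated product.

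The main obstacle is the bookkeeping in these last two steps: the determinant reduction is mechanical once the block structure is pinned down, but tracking the shifts $\lambda_i-\mu_j-i+j$ across block boundaries, and then regrouping the leftover factorial products so that the arithmetic progressions line up with the precise superfactorial arguments above, is where essentially all the work lies. An alternative route, taken by Morales--Pak--Panova, sidesteps part of this via the Naruse hook-length formula: the sum over excited diagrams of $\lambda/\mu$ can be rewritten, through the Lindstr\"om--Gessel--Viennot lemma, as a determinant enumerating non-intersecting lattice paths, after which the same Cauchy--Vandermonde evaluation applies.
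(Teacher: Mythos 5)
First, a remark on the comparison you asked for: the paper does \emph{not} prove \cref{th:KimOh} at all --- it imports the formula from Kim--Oh \cite{KimOh} and Morales--Pak--Panova \cite{MPP3} and only uses it to generate the numerical data of \cref{sec:numerics}. Your proposal must therefore stand on its own, and it does not: the decisive step rests on a false claim. Starting from Aitken's determinant is fine, and your reading of the shape is correct; writing $x_i=\lambda_i-i$ and $y_j=j-\mu_j$, the entries are $1/(x_i+y_j)!$, where each of the two sequences is a union of two blocks of consecutive integers. But this matrix is \emph{not} a Cauchy matrix in disguise: no extraction of factorials from rows and columns produces entries $g(x_i)h(y_j)/(x_i+y_j)$, since that would force $1/(x+y-1)!$ to split as a function of $x$ times a function of $y$, which it does not. (This obstruction is independent of the size of the matrix, so even granting your unsubstantiated ``collapse'' to a bounded-size determinant, the same problem remains.) What factoring actually gives is
\[
\det_{1\le i,j\le\ell}\left(\frac{1}{(x_i+y_j)!}\right)
=\frac{\det_{1\le i,j\le\ell}\bigl(q_j(x_i)\bigr)}{\prod_{i=1}^{\ell}(x_i+Y)!},
\qquad
q_j(x):=(x+y_j+1)(x+y_j+2)\cdots(x+Y),
\]
with $Y:=\max_j y_j$, so $q_j$ is monic of degree $Y-y_j$. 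When $\mu=\emptyset$ these degrees are consecutive, the right-hand determinant is a Vandermonde, and one recovers the classical product formula for $f^{\lambda}$. When $\mu\neq\emptyset$ the degrees have gaps, and $\det\bigl(q_j(x_i)\bigr)$ equals the Vandermonde \emph{times} a factorial-Schur-type symmetric polynomial in the $x_i$; showing that this extra factor evaluates to the stated superfactorials, for precisely this family of shapes, is the entire content of the theorem and is exactly what your outline assumes away. Already for $2\times 2$ with $y=(0,2)$, $Y=2$, one finds $\det\bigl(q_j(x_i)\bigr)=(x_1-x_2)(x_1+x_2+3)$, not a product of differences.

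There is also a structural reason the plan cannot succeed as described: every ingredient you invoke (Aitken's formula, the block structure with boundedly many distinct row lengths, row/column reductions) applies verbatim to \emph{any} $\lambda=(A^m,B^p)$, $\mu=(b^a)$, a six-parameter family, whereas the shapes of Figure 1 form the proper subfamily cut out by the alignment constraint $B-b=m-a$ (the last $e$ rows of $\lambda$ have length exactly $b+c$ with $c=m-a$). Outside that subfamily no formula of the stated shape holds: for instance $\lambda=(3,3,1)$, $\mu=(1)$ has the required block structure but $f^{\lambda/\mu}=21$, and the prime factor $7$ can arise neither from $|\lambda/\mu|!=6!$ nor from any $\Phi(j)$ with $j\le 7$, which is where all the parameter sums land for this shape. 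A proof that never uses the constraint would prove too much. This is consistent with how the theorem is actually established in the literature: Kim and Oh \cite{KimOh} evaluate the count via Selberg-type integrals, and Morales, Pak and Panova \cite{MPP3} go through the Naruse hook-length formula, excited diagrams and genuinely nontrivial determinant/Schur-function evaluations. Your closing paragraph gestures at the latter route, but again delegates the crux to ``the same Cauchy--Vandermonde evaluation'', which is precisely the step that does not exist off the shelf.
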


For such shapes, the numbers $f^\la$ and $f^\mu$ can also easily be written as quotients of superfactorials by using the hook-length formula.
Asymptotic expansions of logarithms of factorials (and thus of superfactorials) are known at any order,
so it is possible in principle to compute asymptotic expansions of $\log A_{\la,\mu}$ at any order for such shapes.
The computation is however very cumbersome in practice and best done with a computer by specializing $a,b,c,d,e$
to some functions of a single parameter $n$.

We summarise some examples of the asymptotic behaviour of $\log A_{\la/\mu}$ in the tables below.
Note that here $|\lambda|$ is $\Theta(n)$ and not exactly $n$ as before.
It makes the presentation simpler and hopefully will not create any confusion for the reader, 
since we are only interested in the order of magnitude of $A_{\la,\mu}$ and do not care about constants. 

\bigskip

\begin{center}
\label{table:small k}
$\begin{array}{|c|c|c|c|c|c||c|c|}
  \hline
 a & b & c & d & e & \log A_{\la/\mu} & |\mu| & |\mu|^{\frac{3}{2}}|\la|^{-\frac{1}{2}} \\
  \hline
   n^{\frac{1}{17}} & 2n^{\frac{1}{17}} & n^{\frac{1}{2}} & 2n^{\frac{1}{2}} & n^{\frac{1}{2}} & \frac{1}{4} n^{-\frac{11}{34}} + \O \left( n^{-\frac{13}{17}}\right) & 2n^{\frac{2}{17}} & \Theta \left(n^{-\frac{11}{34}} \right) \\

2n^{\frac{1}{13}} & n^{\frac{1}{13}} & n^{\frac{1}{2}} & n^{\frac{1}{2}} & n^{\frac{1}{3}} & -\frac{1}{2} n^{-\frac{7}{26}} + \O \left( n^{-\frac{17}{39}}\right) & 2n^{\frac{2}{13}} & \Theta \left(n^{-\frac{7}{26}} \right) \\

2n^{\frac{1}{10}} & n^{\frac{1}{10}} & n^{\frac{1}{2}} & n^{\frac{1}{3}} & n^{\frac{1}{2}} & \frac{1}{2} n^{-\frac{1}{5}} + \O \left( n^{-\frac{11}{30}}\right) & 2n^{\frac{1}{5}} & \Theta \left(n^{-\frac{1}{5}} \right) \\

n^{\frac{1}{6}} & 2n^{\frac{1}{6}} & n^{\frac{1}{2}} & n^{\frac{1}{2}} & n^{\frac{1}{3}} & \frac{1}{2} + \O \left( n^{-\frac{1}{6}}\right) & 2n^{\frac{1}{3}} & \Theta \left(1\right) \\

n^{\frac{1}{6}} & n^{\frac{1}{6}} & n^{\frac{1}{2}} & n^{\frac{1}{2}} & n^{\frac{1}{2}} & \frac{2}{27} n^{-\frac{1}{3}} + \O \left( n^{-\frac{2}{3}}\right) & n^{\frac{1}{3}} & \Theta \left(1\right) \\

n^{\frac{1}{5}} & 3n^{\frac{1}{5}} & n^{\frac{1}{2}} & n^{\frac{1}{2}} & n^{\frac{1}{3}} & \frac{3}{2} n^{\frac{1}{10}} + \O \left( n^{-\frac{1}{15}}\right) & 3n^{\frac{2}{5}} & \Theta \left(n^{\frac{1}{10}} \right) \\
  \hline
\end{array}$
\vspace{2pt}
    \captionof{table}{Asymptotics for $ \log A_{\la/\mu}$ when $|\mu| = o(|\la|^{\frac{1}{2}})$}
\end{center}

Table 1 gives evidence that the bound in Theorem \ref{th: k medium} is sharp.
Indeed, this bound is reached for skew diagrams with $|\mu| = \Theta(|\la|^\alpha)$,
for various values of $\alpha$ between $0$ and $1/2$.
\medskip

\begin{center}
\label{table:big k}
$\begin{array}{|c|c|c|c|c|c||c|c|}
  \hline
 a & b & c & d & e & \log A_{\la/\mu} & |\mu| & |\mu|^{\frac{3}{2}}|\la|^{-\frac{1}{2}} \\
  \hline
n^{\frac{1}{4}} & 3n^{\frac{1}{4}} & n^{\frac{1}{2}} & n^{\frac{1}{2}} & n^{\frac{1}{3}} & \frac{3}{2} n^{\frac{1}{4}} + \O \left( n^{\frac{1}{12}}\right) & 3n^{\frac{1}{2}} & \Theta \left(n^{\frac{1}{4}} \right) \\

2000n^{\frac{1}{4}} & 400n^{\frac{1}{4}} & n^{\frac{1}{2}} & n^{\frac{1}{2}} & 2n^{\frac{1}{2}} & 160000000 n^{\frac{1}{4}} + \O \left(1\right) & 800000n^{\frac{1}{2}} & \Theta \left(n^{\frac{1}{4}} \right) \\

n^{\frac{1}{3}} & 2n^{\frac{1}{3}} & n^{\frac{1}{2}} & 2n^{\frac{1}{2}} & n^{\frac{1}{2}} & \frac{1}{4} n^{\frac{1}{2}} + \O \left( n^{\frac{1}{3}}\right) & 2n^{\frac{2}{3}} & \Theta \left(n^{\frac{1}{2}} \right) \\

n^{\frac{2}{5}} & 2n^{\frac{2}{5}}  & n^{\frac{1}{2}} & 2n^{\frac{1}{2}} & n^{\frac{1}{2}} & \frac{1}{4} n^{\frac{7}{10}} + \O \left( n^{\frac{3}{5}}\right) &  2n^{\frac{4}{5}} & \Theta \left(n^{\frac{7}{10}} \right) \\

n^{\frac{2}{5}} & 2n^{\frac{2}{5}}  & n^{\frac{1}{2}} & n^{\frac{1}{2}} & 2n^{\frac{1}{2}} & -\frac{1}{4} n^{\frac{7}{10}} + \O \left( n^{\frac{3}{5}}\right) &  2n^{\frac{4}{5}} & \Theta \left(n^{\frac{7}{10}} \right) \\

n^{\frac{2}{5}} & n^{\frac{2}{5}}  & n^{\frac{1}{2}} & n^{\frac{1}{2}} & n^{\frac{1}{2}} & \frac{2}{27} n^{\frac{2}{5}} + \O \left( n^{\frac{2}{5}}\right) &  n^{\frac{4}{5}} & \Theta \left(n^{\frac{7}{10}} \right) \\

n^{\frac{1}{2}} & 2n^{\frac{1}{2}} & 3n^{\frac{1}{2}} & n^{\frac{1}{2}} & n^{\frac{1}{2}} & 0.56746 n + \O \left(1\right) & 2n & \Theta \left(n \right) \\
  \hline
\end{array}$
\vspace{2pt}
    \captionof{table}{Asymptotics for $ \log A_{\la/\mu}$ when $|\mu| \geq C |\la|^{\frac{1}{2}}$}
\end{center}

In the cases represented in Table 2 (and in all other cases that we have investigated), 
we observe that $\log A_{\la/\mu}$ is at most of order $|\mu|^{\frac{3}{2}}|\la|^{-\frac{1}{2}}$. Here are some further comments:
\begin{itemize}
  \item 
The first and second lines focus on the case where $|\mu| \sim C |\la|^{1/2}$ for some constant $C$.
Comparing \cref{th: k medium} and \cref{th: k big} could let us think that the order of magnitude of $ \log A_{\la/\mu}$
depends on the constant $C$.
The first and second lines of the table do not exhibit such a dependence.
\item The fifth line of the table gives an example,
  where $\log A_{\la/\mu}$ behaves as 
  $\Theta(|\mu|^{\frac{3}{2}}|\la|^{-\frac{1}{2}})$, with a negative constant.
  In this case $A_{\la/\mu}$ reaches the lower bound given in \cref{conj}.
  Intermediate situations, where $\log A_{\la/\mu}$ is negligible compared
  to $|\mu|^{\frac{3}{2}}|\la|^{-\frac{1}{2}}$, do also occur,
  see the second last line of the table.
\end{itemize}

All the results are evidence for Conjecture \ref{conj}.
However, as argued in the previous section,
proving this conjecture would require new techniques or ideas.

\section*{Acknowledgements}
The authors thank Igor Pak for stimulating conversations.
We would also like to thank Christian Krattenthaler for pointing out reference \cite{muller07fuchsian}.
Finally, we thank an anonymous referee for useful suggestions
to improve the paper's presentation.

\bibliographystyle{siam}
\bibliography{biblio}

\end{document}